\renewenvironment{proof}{{\bfseries Proof.}}{\qed}
\numberwithin{equation}{section} 
\newtheorem{theorem}{Theorem}[section] 
\newtheorem{pro}[theorem]{Proposition} 
\newtheorem{cor}[theorem]{Corollary} 
\newtheorem{lemma}[theorem]{Lemma} 
\newtheorem{thm}[theorem]{Theorem}
\theoremstyle{definition}
\newtheorem{defn}[theorem]{Definition} 
\newtheorem{rk}[theorem]{Remark} 
\newcommand{\gt}{\tilde{\gamma}}
\newcommand{\n}{\newline}
\newcommand{\tb}{\textbf}
\newcommand{\ap}{\alpha_{p}}
\newcommand{\imp}{\Rightarrow}
\newcommand{\z}{\mathbb{Z}}
\newcommand{\g}{\gamma_{p}}
\newcommand{\io}{\iota }
\newcommand{\secref}[1]{Section~\ref{#1}}
\newcommand{\thmref}[1]{Theorem~\ref{#1}}
\newcommand{\lemref}[1]{Lemma~\ref{#1}}
\newcommand{\corref}[1]{Corollary~\ref{#1}}
\newcommand{\eqnref}[1]{~{\textrm(\ref{#1})}}
\numberwithin{equation}{section}
\begin{document}
	\title[Spherical growth of reciprocal classes in the Hecke Groups]{ Spherical growth of reciprocal classes in the Hecke Groups }
	\author{Debattam Das  \and Krishnendu Gongopadhyay}
	\address{Indian Institute of Science Education and Research (IISER) Mohali,
		Knowledge City,  Sector 81, S.A.S. Nagar 140306, Punjab, India}
	\email{debattam123@gmail.com}
    	\address{Indian Institute of Science Education and Research (IISER) Mohali,
		Knowledge City,  Sector 81, S.A.S. Nagar 140306, Punjab, India}
	\email{ krishnendu@iisermohali.ac.in}
	\subjclass[2020]{Primary 20H10; Secondary 11F06, 05E16, 37C35}
	\keywords{ Hecke groups, reversible elements, reciprocal geodesics, growth rate}

	\begin{abstract}
		Let $\Gamma_p$ denote the Hecke group where $p=2r$, $r>0$. Let $\mathcal{N}_l$ denote the set of conjugacy classes of reciprocal elements of word length $l$ in $\Gamma_p$. We prove that for $l \to \infty$, 
		$$|\mathcal{N}_l| = \mathcal{O}\left(\left\lfloor \tfrac{l+1}{2} \right\rfloor^{s-1} \rho^{\left\lfloor \tfrac{l+1}{2} \right\rfloor} \right),
		$$
		where $\mathcal O$ is the `big O',  $\rho \in [\sqrt{2}, 2]$ is the unique positive real root of
		$$
		p(x) = x^{r+1} - 2\sum_{j=1}^{r-1} x^{r-j} - 1,
		$$
		and $s$ is the maximal multiplicity among the roots of $p(x)$. 
        
        Our method relies on the free product structure of the Hecke group $\Gamma_p$, a combinatorial counting function, and recurrence relations derived from cyclically reduced representatives. We also derive that the growth rate of the primitive reciprocal classes of word length $l$ is in agreement with that of $\mathcal{N}_l$. This work generalizes previous results for odd $p$ and provides an explicit asymptotic bound for all Hecke groups.
	\end{abstract}
	
	\date{\today}
	
	\maketitle	
	
	\section{Introduction}\label{1}

    A non-trivial element $g$ in a group $\Gamma$ is called \emph{reversible} (or \emph{real}) if there exists $h \in \Gamma$ such that $hgh^{-1} = g^{-1}$. If $h$ is an involution (i.e., $h^2 = 1$), then $g$ is said to be \emph{strongly reversible} (or \emph{strongly real}). Every strongly reversible element, except involutions themselves, can be written as a product of two involutions. For various perspectives on reversibility, see \cite{BR}, \cite{LR}, \cite{Sa}, \cite{OS}.

In Fuchsian groups, a torsion-free element is reversible if and only if it is strongly reversible; see \cite[Theorem 1.1]{DG}. Moreover, if $hgh^{-1} = g^{-1}$ for some $g \in \Gamma$, then $h$ must be an involution. In the modular group $\mathrm{PSL}(2, \mathbb{Z})$, such elements are referred to as \emph{reciprocal} (following Sarnak \cite{Sa}), and we adopt this terminology. A conjugacy class that contains a reciprocal element is called a \emph{reciprocal class}.

If $g$ is a hyperbolic reciprocal element, then the axis of $g$ projects to a closed geodesic on $\mathbb{H}^2/\Gamma$. A reciprocal element implies that this geodesic passes through an even-order cone point, such geodesics are called \emph{reciprocal geodesics}. Thus, reciprocal classes correspond bijectively to the equivalence classes of reciprocal geodesics on $\mathbb{H}^2/\Gamma$.

An involution $h$ conjugating $g$ to $g^{-1}$ is called a \emph{reciprocator} of $g$. Reciprocators are unique up to elements in the centralizer of $g$; see \cite{BR}. If a reciprocal element $g$ is a hyperbolic with a reciprocator $h$,  then the subgroup generated  by $g$ and $h$ is an infinite dihedral subgroup of $\Gamma$.  Hence, classifying the reciprocal geodesics amounts to classifying such dihedral subgroups up to conjugacy. Each equivalence class of reciprocal geodesics corresponds to the conjugacy class of a reciprocal element, and thus to a dihedral subgroup generated by $g$ and a reciprocator.

In recent years, the asymptotic growth and distribution of reciprocal geodesics have been studied in a variety of settings,  in the modular group \cite{Sa}, \cite{BK},  on hyperbolic surfaces \cite{ES}, and more generally on negatively curved orbifolds \cite{PP}.     Sarnak investigated reciprocal classes in the modular group $\mathrm{PSL}(2, \mathbb{Z})$, obtaining asymptotic growth and equidistribution results; see \cite{Sa}. Bourgain and Kontorovich further studied the growth rate of \emph{low-lying} reciprocal geodesics on the modular surface \cite{BK}. Erlandsson and Souto extended these results to arbitrary two-dimensional hyperbolic orbifolds; see \cite{ES}. In all these works, the hyperbolic length of geodesics serves as the measure for asymptotic growth. 

Parkkonen and Paulin \cite{PP} have recently generalized the results of Sarnak and Erlandsson--Souto to strongly reversible periodic orbits of the geodesic flow on pinched negatively curved Riemannian orbifolds. Their approach, however, is based on the thermodynamic formalism, marking a significant departure from earlier methods.

	Another approach, based on the free product structure of the modular group, was used by Basmajian and Suzzi Valli.  They derived asymptotic growth rates for reciprocal classes using the lengths of the associated combinatorial words; see \cite{BV}.  Following this approach, Marmolejo, in \cite{Ma},  obtained the spherical growth for the reciprocal clasess in a Hecke group $\Gamma_p$ for $p$ odd. Here, by {\em spherical growth} we mean the asymptotic count of the number of distinct reciprocal classes represented by words of exactly a given length.

    As follows from \cite{DG}, the combinatorics of the reciprocal words are relatively simpler for $p$ odd. This is due to the presence of a unique involution up to conjugacy that conjugates a reciprocal element $g$ to $g^{-1}$. For $p$ even, this is not the case as there are two choices for such an involution, up to conjugacy. Accordingly, it requires further analysis to resolve the complication when $p$ is even. 
	
	In this paper, we have generalized Marmolejo's result for a Hecke group $\Gamma_p$, for $p$ arbitrary. In particular, we obtain an asymptotic estimate for the size of the reciprocal classes in $\Gamma_p$ when $p$ is even.  
	
	Recall that the Hecke group $ \Gamma_{p} $ is a Fuchsian group which is isomorphic to the free product  $ \z_{2} \ast \z_{p}$, and it is generated by 
	the maps
	$$\iota: z\mapsto-\dfrac{1}{z}~~~~~ \\ \hbox{ and } \\ ~ ~~~~ \alpha_p: z\mapsto z+ \lambda_{p}, ~ p \geq 3, \hbox{ where}~ \lambda_{p}=2\cos\pi/p. $$  
	In particular, $\Gamma_3$ is the modular group. Note that the map $ \iota\ap :z \mapsto -\dfrac{1}{z+\lambda_{p}} $ has order  $ p $. We denote $ \g=\iota \alpha_p$. So we can write the presentation of the Hecke group as $\langle~\io,\g~|~\io^2=\g^p=1 ~\rangle$. We will fix this presentation for the Hecke group throughout the paper. When $p=2r$, $\g^r$ is an involution. We denote $\tilde \gamma=\g^r$. 
	
	Since the spherical growth of a reciprocal class for odd $p$ follows from the work of Marmolejo~\cite{Ma}, without loss of generality, we may assume throughout this paper that $p$ is even. Specifically, we take $p = 2r$ with $r > 0$. We prove the following result.
	
	\begin{theorem} \label{mthm}
		Let $\mathcal{N}_l$ denote the set of conjugacy classes of reciprocal elements of word length $l$. Then, as $l \to \infty$,
		$$|\mathcal{N}_l| = \mathcal{O}\left(\left\lfloor \tfrac{l+1}{2} \right\rfloor^{s-1} \rho^{\left\lfloor \tfrac{l+1}{2} \right\rfloor} \right),
		$$
		where $\rho \in [\sqrt{2}, 2]$ is the unique positive real root of
		$$
		p(x) = x^{r+1} - 2\sum_{j=1}^{r-1} x^{r-j} - 1,
		$$
		and $s$ is the maximal multiplicity among the roots of $p(x)$.
	\end{theorem}
    From the above theorem, we have the following corollary.
    \begin{cor}\label{cor1}
	Let $\mathcal{N}^{p}_l$ denote the set of conjugacy classes of primitive reciprocal elements of word length $l$. Then, as $l \to \infty$,
$$|\mathcal{N}^{p}_l| = \mathcal{O}\left(\left\lfloor \tfrac{l+1}{2} \right\rfloor^{s-1} \rho^{\left\lfloor \tfrac{l+1}{2} \right\rfloor} \right),
$$
where $\rho \in [\sqrt{2}, 2]$ is the unique positive real root of
$$
p(x) = x^{r+1} - 2\sum_{j=1}^{r-1} x^{r-j} - 1,
$$
and $s$ is the maximal multiplicity among the roots of $p(x)$.

		\end{cor}
	In addition to the above asymptotic bound, we provide explicit combinatorial formulas for the number of conjugacy classes in each reciprocal type.In addition to the global asymptotic bound, we provide explicit combinatorial formulas for the number of conjugacy classes in each reciprocal type, see \thmref{count1}. We have used Marmolejo's counting function $\Psi^r_n(x)$, cf.~\cite{Ma}, which counts the number of integer solutions to the equation $\sum_{i=1}^n p_i = x$, with $0 < p_i \leq r$. This extends Marmolejo's count to reciprocal classes in $\Gamma_p$ for all values of $p$.
	
	To prove the results, we have used the combinatorial structure of $\Gamma_p \approx \mathbb Z_2 \ast \mathbb Z_p$ to determine the spherical growth.   
	Each reciprocal class contains some cyclically reduced words, which can be expressed in terms of the generators $\iota$ and $\g$. Let $\gt = \g^r$. Then a reciprocal element in $\Gamma_p$ is conjugate to one of the following forms: $\iota k \gt k^{-1}$, $\gt k \gt k^{-1}$, or $\iota k \gt k^{-1}$, for some word $k$. We refer to a word of the form $\iota \g ^{k_1}\io \g^{k_2}\dots \io \g^{k_n}$, that is, starting with $\iota$ and ending with $\g$, as an \emph{$\iota$-$\g$ word}. Using these conjugacy representatives, we derive a summation formula for their word lengths, which in turn yields a recurrence relation associated with each reciprocal class. Finding a closed-form solution for this recurrence relation provides the desired estimate.

	Now, we briefly outline the structure of this paper. After recalling some preliminary notions, we note preliminary observations about the counting function in \secref{prel}. We estimate counts for the reciprocal classes in \secref{3}. Finally, we prove \thmref{mthm} and \corref{cor1} in \secref{4}.

	\medskip
	\tb{Notations:} We have used the following notations throughout the paper.
	\begin{enumerate}
		\item For some real number $x,$ $\lfloor x\rfloor$ is largest integer less than equal to $x$ and $\lceil x\rceil$ is the smallest integer greater than equal to $x$. 
		\item For some real-valued functions $f,g$ such that they are defined on the  same domain $D\subset \mathbb{R}$, we  say \[f(x)=\mathcal{O}(g(x))~~\text{ as }~x\rightarrow\infty \] if there exists $x_0\in D$ and $M>0$ such that \[|f(x)|\leq M|g(x)|~~\text{for}~~x\geq x_0.\] 
	\end{enumerate}
	
	\section{Preliminaries}\label{prel}
	
	\subsection{Reciprocal elements in Hecke groups}In \cite{DG}, different types of reciprocal elements for Hecke groups were defined. Here the following types are:
	\begin{defn}
		The reciprocal word is said to be \textit{symmetric} if it can be seen as a product of those elements which are only conjugate to $ \io $.
	\end{defn}
	\begin{defn}
		The reciprocal word is said to be \textit{$ p $-reciprocal} if it can be seen as a product of those elements which are only conjugate to $ \gt $.
	\end{defn}
	\begin{defn}
		The reciprocal word is said to be \textit{symmetric $ p $-reciprocal} if it can be seen as a product of conjugate of $\io$ and conjugate of $ \gt $ respectively.
	\end{defn}
	\begin{rk}
		If $p$ is an odd integer, all reciprocal elements are symmetric. Other types appear in the even case.
	\end{rk}
	\subsection{Word Length} In this section, we will define the word length of the elements of the free products and conjugacy classes.
	\begin{defn}\label{word}
		The\textit{ length} of a word is the number of the generating elements of the group presented in its reduced presentation.
	\end{defn}
	In the group $\Gamma_{p,q} =\langle a, b ~ | ~ a^{p},b^{q} \rangle  $ where $p,q$ are both odd, the length of the reduced word $ a^{j_{1}}b^{k_{1}}a^{j_{2}}b^{k_{2}}\dots a^{j_{n}}b^{k_{n}} $ where $-p/2\leq j_{i}\leq p/2, -q/2\leq k_{i}\leq q/2 $, is $ \sum_{i=1}^{n}|{j_{i}}|+|{k_{i}}| $.
	\begin{defn}
		The \emph{length} of the conjugacy class of a word is the minimum length of a cyclically reduced word present in that conjugacy class. 
		
		If a conjugacy class contains cyclically reduced word with minimal length $l$, we loosely call it a \emph{conjugacy class of length $l$}. 
	\end{defn}
	Our first objective is to calculate the number of conjugacy classes and the number of different reciprocal classes with the same word length  $l.$ To deduce such a number, we have used the following function:

	\subsection{A counting function: } Let the function $\Psi_{n}$ be defined by the number of solutions of the equation, \begin{equation}\label{2.1}
		\sum_{i=1}^{n} p_{i}=x, 
	\end{equation}
	where, $p_{i}$, $i=1, \ldots, n$,  and $x$ are positive integers. It is easy to see that $\Psi_{n}(x) ={x-1 \choose n-1}$. 
	Let $\Psi_{n}^{r}(x)$ denote the number of solutions of \eqnref{2.1} with constraints $p_i \leq r$, $i=1, \ldots, n$,  for some $r<x$. We have the following Proposition.
	
	\begin{pro}

		Let \( \Psi^r_n(x) \) denote the number of positive integral solutions of the equation, \begin{equation}\label{2.0}
			\sum_{i=1}^{n} p_{i}=x, 
		\end{equation}
		where, $p_{i}$'s, and $x$ are positive integers, and each $p_{i}$  satisfying \( 1 \leq p_i \leq r \). Then,
		\[
		\Psi^r_n(x) = \sum_{k=0}^{\left\lfloor \frac{x - n}{r} \right\rfloor} (-1)^k \binom{n}{k} \binom{x - rk - 1}{n - 1}
		\]
		for \( x \ge n \). If \( x < n \), then \( \Psi^r_n(x) = 0 \).
	\end{pro}
	
	\begin{proof}
		We consider
		in the Equation \eqnref{2.0}  each \( p_i \in [0, r] \). 
		
		Let \( S \) be the set of all solutions with \( p_i \ge 1\), and let \( A_i \) be the set of solutions where \( p_i \ge r+1 \), i.e., where the upper bound is violated in the \( i \)-th coordinate. Let $S^{\prime}$ be the set of all require solutions. Then we have, 
		\[  S^{\prime} =S-(\bigcup_{i}A_i)  \]
		Then the number of require solutions is
		\[
		|S'| = |S| - \sum |A_i| + \sum_{i<j} |A_i \cap A_j| - \cdots + (-1)^k \sum_{i_1 < \dots < i_k} |A_{i_1} \cap \cdots \cap A_{i_k}| + \cdots,
		\]
		by the inclusion-exclusion principle.
		
		We now count the number of solutions with \( k \) variables having the lower bound $r$. There are \( \binom{n}{k} \) ways to choose the violating positions of $p_i$'s, and in each, the sum is \( rk \). So we are solving:
		\[
		\sum p_i = x - rk \quad \text{(with no upper bound on \( p_i \))}
		\]
		which has \( \binom{x - rk - 1}{n - 1} \) solutions.
		
		Hence, the total number is:
		\[
		\Psi^r_n(x) = |S^{\prime}|=\sum_{k=0}^{\left\lfloor \frac{x - n}{r} \right\rfloor} (-1)^k \binom{n}{k} \binom{x - rk - 1}{n - 1}.
		\]
		This completes the proof. 
	\end{proof}
	
	\begin{lemma}\label{3.1}
		Let $r, x$ be given natural numbers. Then the total number of the solutions $\{n; k_1, \ldots, k_n\}$, $n>0, {k}_i \in \mathbb Z$, $-r<k_{i}\leq r$ for all $k_{i}$ and $q$ be the number of $k_{i}$'s which are equal to $r$ of the equation $\sum_{i=1}^{n}|k_{i}|+n=x$ is given by the following sum after considering all possibilities for $ n$ and $k_{i}$'s, 
		\[\sum_{q=0}^{\lceil\frac{x}{r+1}\rceil-1}\sum_{n=\lceil \frac{x-q}{r}\rceil}^{\lfloor \frac{x-(r-1)q}{2}\rfloor}\Psi_{n}^{r-1}(x-n-rq)2^{n-q} {n\choose q}.\]
	\end{lemma}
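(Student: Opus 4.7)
The plan is to stratify the solution set by the pair $(n,q)$, count each stratum via a product-principle decomposition, and then sum over the admissible range. Fix $n>0$ and $0\le q\le n$. A solution with these parameters is a tuple $(k_1,\dots,k_n)$ with $k_i\in\{-(r-1),\dots,-1,1,\dots,r\}$, exactly $q$ entries equal to $r$, and $\sum_i|k_i|+n=x$. The count of such tuples factors as $\binom{n}{q}$ (for the choice of which $q$ positions carry the value $r$), times $2^{n-q}$ (an independent sign choice for each of the remaining $n-q$ positions, since $+p_j$ and $-p_j$ both lie in the allowed set while $k_i=-r$ is excluded), times the number of ways to fill in the absolute values of the remaining $n-q$ positions. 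Those remaining absolute values lie in $\{1,\dots,r-1\}$ and must sum to the residual $x-n-rq$, which by the definition of the counting function is exactly the factor $\Psi^{r-1}_{n-q}(x-n-rq)$ appearing in the displayed formula.

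Next, I would pin down the range of $(n,q)$ directly from the constraints. The existence of $n-q$ positive integers in $\{1,\dots,r-1\}$ summing to $x-n-rq$ is equivalent to the double inequality
\[
n-q \;\le\; x-n-rq \;\le\; (r-1)(n-q).
\]
Rearranging, the left inequality gives $n\le\lfloor(x-(r-1)q)/2\rfloor$ and the right gives $n\ge\lceil(x-q)/r\rceil$, producing the inner summation limits. The outer range for $q$ is then the set of non-negative $q$ for which this inner interval is non-empty; a short arithmetic check (using that $q(r+1)\le x$ is forced by $q\le n$ and $rq\le x-n$) shows this is exactly $0\le q\le \lceil x/(r+1)\rceil-1$. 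Combining the three factors and summing over admissible $(n,q)$ yields the claimed identity.

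The main technical point, and the step where I expect to spend the most care, is the book-keeping around boundary cases: adopting the convention that the empty composition counts as $1$ when $q=n$ and $x-n-rq=0$, aligning the ceilings and floors precisely with the inequalities above (especially at equality, when $x/(r+1)$ or $(x-q)/r$ is an integer), and verifying that distinct $(n,q)$-strata contribute disjointly so no tuple is counted twice. The combinatorial decomposition itself is otherwise a direct application of the product rule, and once the range analysis is settled the identity follows immediately.
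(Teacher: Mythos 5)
Your proposal follows essentially the same route as the paper: stratify the solutions by the pair $(n,q)$, apply the product rule to obtain the factor $\binom{n}{q}\,2^{n-q}\,\Psi^{r-1}(\cdot)$, and derive the summation limits from the same two inequalities. The one point of divergence is that your count (correctly) produces $\Psi^{r-1}_{n-q}(x-n-rq)$ for the absolute values of the $n-q$ entries not equal to $r$, whereas the lemma and the paper's proof display $\Psi^{r-1}_{n}(x-n-rq)$; since the paper's own argument later reindexes the residual equation as $\sum_{i=1}^{n-q}|k_i|=x-n-rq$, the subscript $n$ in the displayed formula is evidently a typo, and your version is the one the counting argument actually supports. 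Everything else --- the sign factor $2^{n-q}$, the bounds $\lceil (x-q)/r\rceil \le n \le \lfloor (x-(r-1)q)/2\rfloor$, and the range of $q$ --- matches the paper's proof.
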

	\begin{proof}
		Since some of the $k_{i}$'s are equal to $r$, we have two choices depending upon their sign for those $k_{i}$'s such that $k_{i}\neq r$. So, the number of the solutions for the equation, for fixed $n$ and a fixed $q$ is:  
		\begin{equation}
			\Psi_{n}^{r-1}(x-n-qr)2^{n-q}.
		\end{equation}
		Now, $n$ will be maximum when all $k_{i} \neq r$ have absolute values $1$. Therefore,
		\[n-q+n+rq\leq x\]
		\[\imp 2n+(r-1)q\leq x\]
		\[\imp n\leq \frac{x-(r-1)q}{2}.\]
		Similarly, $n$ will reach the lower bound if the absolute value of each $k_{i} \neq r$ is $r-1$. Therefore,
		\[(n-q)(r-1)+rq+n\geq x\]
		\[\imp nr+q\geq x \]
		\[\imp n\geq \frac{x-q}{r}.\] So,  the number of all the solutions of the equation for fixed $q,$
		\[\sum_{n=\lceil \frac{x-q}{r}\rceil}^{\lfloor \frac{x-(r-1)q}{2}\rfloor}\Psi_{n}^{r-1}(x-n-rq)2^{n-q}.\]
		Note that 		$q$ has its minimum value $0$ when  all $k_i\neq r$, and $q$ has  maximum value $\lceil\frac{x}{r+1}\rceil-1$ when all $k_i$'s except one, are equal to $r$.   Also note that there are ${n\choose q}$ possibilities of having $q$ number of $k_i$ to be equal to $r$ in the tuple $(k_1,k_2,\dots ,k_n)$. 
		Now summing up for $q$ in between $0$ and $\lceil\frac{x}{r+1}\rceil-1$, the total number of solutions is given by: 
		\[\sum_{q=0}^{\lceil\frac{x}{r+1}\rceil-1}\sum_{n=\lceil \frac{x-q}{r}\rceil}^{\lfloor \frac{x-(r-1)q}{2}\rfloor}\Psi_{n}^{r-1}(x-n-rq)2^{n-q}{n\choose q}.\]
		This proves the lemma. 
	\end{proof}

	\section{Counting the reciprocal conjugacy classes}\label{3}
	To count the reciprocal classes in the Hecke group, we need to specify the representations of the reciprocal classes. We recall the following theorem from \cite{DG},
	\begin{theorem}{\cite{DG}}\label{th}
		For any reciprocal hyperbolic element in $ \Gamma_{p} $, the following possibilities can occur.
		\begin{enumerate}
			\item If p is odd, every reciprocal class has exactly four symmetric elements.
			\item If p is even.
			\begin{enumerate}
				\item Suppose in the reciprocal class, the reciprocators are conjugate to $ \iota $. Then, there are exactly four symmetric elements in that class.
				\item Suppose in the reciprocal class, the reciprocators conjugate to $\gamma_{p}^{r} $. Then, there are exactly $ 2p $ many $p$-reciprocal elements. 
				\item Suppose in the reciprocal class, each reciprocal element has two types of reciprocators; one type of reciprocator conjugates to $ \iota $, and another type of reciprocator conjugates to $ \gamma_{p}^{r} $.
				\begin{enumerate}
					
					\item If the reciprocal class does not contain any non-zero power of $ \iota \g^r$, then the class has exactly two symmetric elements and $ p $ many $p$-reciprocal elements.

					\item If the reciprocal class contain any non-zero power of $ \iota \g^r$, then the class has exactly two symmetric p-reciprocal elements and $p-2$  $p$-reciprocal elements.			\end{enumerate}
			\end{enumerate}
		\end{enumerate}
		
	\end{theorem}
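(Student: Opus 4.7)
The plan is to work with cyclically reduced representatives and exploit the free product normal form in $\Gamma_p \cong \mathbb{Z}_2 \ast \mathbb{Z}_p$. First I would invoke the standard classification of torsion in a free product of cyclic groups (Kurosh-style): the only conjugacy classes of involutions in $\Gamma_p$ are that of $\io$, and, when $p=2r$, that of $\gt=\g^{r}$. Given a hyperbolic reciprocal $g$, any reciprocator $h$ is such an involution, and the product $hg$ is again an involution in $\Gamma_p$, hence conjugate either to $\io$ or to $\gt$. This bijects the representatives counted in the statement with factorizations $g'=h\cdot k$ of a cyclically reduced representative $g'$ of the conjugacy class $[g]$ into two involutions $h,k$: the symmetric representatives correspond to $h,k$ both conjugate to $\io$, the $p$-reciprocal to both conjugate to $\gt$, and the symmetric $p$-reciprocal to one of each type.

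The next step is to count factorizations modulo the ambiguity of representatives. Since $g$ is hyperbolic, its centralizer in $\Gamma_p$ is cyclic, generated by the primitive root $g_0$; two factorizations that differ by pre-/post-multiplication by a power of $g_0$ give the same element of $[g]$. Combinatorially, writing a cyclic word $w$ as a product of two involutions is the same as giving a palindromic decomposition of $w$. A direct normal-form argument shows that an even palindromic length $\ell$ yields two axes of symmetry, each with a choice of orientation, producing the count of $4$ symmetric representatives in cases (1) and (2)(a). For case (2)(b), every $\gt$-type factorization can be twisted by any of the $p$ powers of $\g$ on each side without leaving the conjugacy class of $\gt$, multiplying the $4$ axial choices above by $p/2$, yielding the $2p$ representatives claimed.

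Case (2)(c) is where I expect the main work. Here both involution types act as reciprocators, and the task is to show that their contributions interact exactly as prescribed in subcases (i) and (ii). The plan is to reduce to analysing when a cyclically reduced word admits a rotation identifying an $\io$-palindromic decomposition with a $\gt$-palindromic one; such a rotation exists precisely when the cyclic word factors as a power of $\io\m$. Under this hypothesis the orbit of factorizations under $\langle g_0\rangle$ collapses two of the would-be $p$-reciprocal representatives onto symmetric ones, leaving $2$ symmetric $p$-reciprocal and $p-2$ purely $p$-reciprocal, i.e. subcase (ii); when no such rotation exists the orbits remain disjoint, giving the $2+p$ count of subcase (i).

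The main obstacle is making the palindrome/rotation bookkeeping rigorous in case (2)(c): one must control the action of $\langle g_0\rangle$ on the set of involutive factorizations and exhibit the ``missing'' two $p$-reciprocal representatives as conjugates of symmetric $p$-reciprocal ones in the presence of an $\io\m$-power. The cleanest path I see is a careful analysis of cyclically reduced forms of $\io\m$ and its powers, verified first in $\Gamma_4$ and $\Gamma_6$ where the algebra is transparent, and then generalised; the reliance on this normal-form machinery is also why the authors of \cite{DG} treat the statement as a structural theorem rather than a computation.
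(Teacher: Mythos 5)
The first thing to say is that this paper does not prove Theorem~\ref{th} at all: it is quoted verbatim from \cite{DG}, so there is no in-paper proof to compare your argument against. The closest internal material is Lemma~\ref{lem}, whose proof \emph{uses} the theorem as input (it cites the counts $4$, $2p$, $p+2$ to enumerate representatives) rather than deriving it. That said, your opening moves are the right ones and are consistent with the framework of \cite{DG}: by the torsion theorem for free products, every involution in $\Gamma_p\cong\z_2*\z_p$ is conjugate to $\io$ or, for $p=2r$, to $\gt$; if $h$ is a reciprocator of $g$ then $hg$ is again an involution, so $g=h\cdot(hg)$ is a product of two involutions; and the trichotomy symmetric / $p$-reciprocal / symmetric $p$-reciprocal is exactly the classification of the conjugacy types of the two involution factors.

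Beyond that skeleton, however, there are genuine gaps. First, the asserted ``bijection'' between the elements counted in the statement and involutive factorizations of one cyclically reduced representative is not a bijection: the theorem counts \emph{elements} of the conjugacy class of each type, and a single element can carry several factorizations --- case (2)(c) is precisely the situation in which every element admits both an $\io$-type and a $\gt$-type reciprocator, so your rule ``symmetric representatives correspond to $h,k$ both conjugate to $\io$'' cannot be used to partition the class, and the bookkeeping of elements versus factorizations has to be done carefully throughout. Second, the counts $4$, $2p$, $p+2$, $p$ are produced by heuristics (``two axes of symmetry, each with an orientation,'' ``multiplying the $4$ axial choices by $p/2$'') with no argument that the resulting elements $\g^kR\g^{-k}$ are pairwise distinct, remain in the class, and exhaust it; this is where the actual content lies. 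Third, the heart of the theorem --- case (2)(c) and the dichotomy governed by powers of $\io\gt$ --- is presented only as a plan: the central claim that a rotation matching the $\io$-palindromic and $\gt$-palindromic decompositions exists precisely when the word is a power of $\io\gt$ is exactly what must be proved, and you explicitly defer it. As written, the proposal is a plausible outline of the strategy of \cite{DG}, not a proof of the statement.
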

	
	\subsection{Counting reciprocal classes}
	We have seen that a reciprocal class $[\alpha]$ in $\Gamma_p$ can be divided into three types depending on whether it has all the reciprocators conjugate to only one of  $\iota$ and  $\gt$, or both of them. In the following, we obtain a `normal form' for such reciprocal classes. 
	Note that a reciprocal element $\alpha$ with the form  $ \io \gt\io \g^{k_{1}}\io \g^{k_{2}}\dots \io \g^{k_{n}}\io \gt\io \g^{-k_{n}}\dots\io \g^{-k_{2}}\io \g^{-k_{1}} $, is a symmetric $p$-reciprocal element, but any even power of $\alpha$ i.e., $\alpha^{2m}$ for  $m\in \z$, may not remain in the same form. It can take the form of a reduced expression like $\io k\io k^{-1}$ or  $\gt k \gt k^{-1}$, up to conjugacy. Although the reciprocal elements have distinct forms in the primitive case, they may coincide in the general case.
	\begin{lemma}\label{lem} Let $[{\tilde{\alpha}}]$ be a reciprocal class in $\Gamma_p$.  Then, we can choose a cyclically reduced representative $\alpha$ (or $\alpha^{-1}$) of this class as follows.  
		
		\begin{enumerate}
			\item Suppose all the reciprocators of $[\alpha]$ are conjugate to $\io$. Then  $\alpha$ is of the form  \\$ \io\g^{k_{1}}\io \g^{k_{2}}\dots \io \g^{k_{n}}\io \g^{-k_{n}}\dots\io \g^{-k_{2}}\io \g^{-k_{1}}  $.
			\item Suppose all the reciprocators of $[\alpha]$ are  conjugate to $ \gt $. Then $\alpha$ is of the form  \n  $ \io \gt\io \g^{k_{1}}\io \g^{k_{2}}\dots \io \g^{k_{n}}\io \gt\io \g^{-k_{n}}\dots\io \g^{-k_{2}}\io \g^{-k_{1}} $.
			\item Suppose  $[\alpha]$ has two types of reciprocators, some are conjugate to  $ \io $ and others conjugate to $ \gt $. 
			If the reciprocal class does not contain any power of $\io \gt$,   then $\alpha$ has the form  either \n
			$ \io\gt\io  \g^{k_{1}}\io \g^{k_{2}}\dots \io \g^{k_{n}}\io \g^{-k_{n}}\dots\io \g^{-k_{2}}\io \g^{-k_{1}} $ or $\io  \g^{k_{1}}\io \g^{k_{2}}\dots \io \g^{k_{n}}\io\gt\io \g^{-k_{n}}\dots\io \g^{-k_{2}}\io \g^{-k_{1}}$.

			If $[\alpha]$ contains any non-trivial power of $\io \gt  $, then it has a unique representative of the form $ (\io\gt )^{k}$ where $ k\in \z$. 
	\end{enumerate}\end{lemma}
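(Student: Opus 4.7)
The plan is to exploit the factorization of any reciprocal element as a product of two involutions. Given a reciprocator $h$ of $\tilde\alpha$, so $h\tilde\alpha h^{-1}=\tilde\alpha^{-1}$ with $h^{2}=1$, a direct computation shows that $j:=\tilde\alpha h$ also squares to the identity, hence $\tilde\alpha=jh$ with $j,h$ involutions of $\Gamma_{p}$. Since every involution in $\z_{2}\ast\z_{2r}$ is conjugate either to $\io$ or to $\gt$, the three cases of \thmref{th}(2) correspond to whether the unordered pair $\{j,h\}$ is of type $\{\io,\io\}$, $\{\gt,\gt\}$, or mixed. In each case the strategy is to first conjugate $\tilde\alpha$ within its class so that one of the reciprocators becomes $\io$ (respectively $\gt$) itself, and then extract the normal form by comparing the unique reduced expressions of $\alpha$ and $\alpha^{-1}$ letter by letter.

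For Case~(1), after conjugation I may assume $\io\alpha\io=\alpha^{-1}$. Writing $\alpha$ in reduced form as $\io^{\varepsilon_{0}}\g^{a_{1}}\io\g^{a_{2}}\cdots\io\g^{a_{m}}\io^{\varepsilon_{m}}$ with $\varepsilon_{0},\varepsilon_{m}\in\{0,1\}$ and intermediate $a_{i}\not\equiv 0 \pmod p$, cyclic reducedness rules out $\varepsilon_{0}=\varepsilon_{m}=1$, while a length count in $\io\alpha\io$ versus $\alpha^{-1}$ rules out $\varepsilon_{0}=\varepsilon_{m}=0$; a cyclic shift then lets me assume $\varepsilon_{0}=1$, $\varepsilon_{m}=0$. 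Letter-by-letter matching forces the palindrome relation $a_{i}+a_{m+1-i}\equiv 0\pmod p$. A putative middle exponent $a_{(m+1)/2}\equiv r\pmod p$ would yield $\gt$ as an internal factor and hence as a reciprocator, placing us in Case~(2) or~(3); so in Case~(1) the index $m$ must be even, $m=2n$, and the palindrome condition produces exactly the normal form in~(1).

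For Case~(2), I conjugate so that $\gt\alpha\gt=\alpha^{-1}$. Using $\gt\g^{a}=\g^{a+r}$, the same letter-by-letter analysis yields a palindrome whose two boundary exponents are shifted by $r$; rewriting these shifts as explicit $\io\gt\io$ blocks gives the stated form. For Case~(3)(i), the coexistence of $\io$-type and $\gt$-type reciprocators lets the Case~(1) argument produce a palindromic skeleton, while the extra $\gt$-type reciprocator forces a single additional $\gt$-factor inside $\alpha$; cyclic permutation places it either adjacent to the initial $\io$ or at the midpoint of the palindrome, yielding the two listed forms. Case~(3)(ii) is the degenerate situation $\alpha\in\langle\io,\gt\rangle$, an infinite dihedral subgroup whose infinite-order elements are exactly the powers $(\io\gt)^{k}$, $k\in\z\setminus\{0\}$; uniqueness of this reduced representative is the uniqueness of reduced forms in the free product.

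The main obstacle is the careful case analysis of boundary letters $\varepsilon_{0},\varepsilon_{m}$ and intermediate exponents in Cases~(1) and~(2): one must verify that the equations $\io\alpha\io=\alpha^{-1}$ and $\gt\alpha\gt=\alpha^{-1}$ admit no cyclically reduced solutions outside the listed palindromic families, and that no hidden $\gt$ arises from cancellation that would silently move $\alpha$ into a different case. A secondary subtlety in Case~(3) is to justify that the extra $\gt$ is unique — not accompanied by a second symmetric copy — so that the two cyclic placements at the end and at the midpoint really exhaust all cyclically reduced representatives of $[\tilde\alpha]$, collapsing to the $(\io\gt)^{k}$ form of~(3)(ii) precisely when the palindromic body has length zero.
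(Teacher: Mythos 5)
Your proposal is correct in substance and rests on the same structural fact as the paper's proof --- that a reciprocal element factors as a product of two involutions, each conjugate to $\io$ or $\gt$ --- but the way you extract the normal form is genuinely different. The paper starts from the explicit decompositions $R=\io\eta\io\eta^{-1}$, $R=\gt\eta\gt\eta^{-1}$ and $R=\gt\eta\io\eta^{-1}$, invokes \thmref{th} to list the finitely many symmetric and $p$-reciprocal elements of the class, substitutes $\eta_1=\io\g^{k_1}\cdots\io\g^{k_n}$ (and the variants $\io\eta_1$, $\eta_1\io$, $\io\eta_1\io$) and multiplies out to see which of the resulting words are cyclically reduced of the stated shape. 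You instead normalize the reciprocator to be literally $\io$ (resp.\ $\gt$) and solve $\io\alpha\io=\alpha^{-1}$ (resp.\ $\gt\alpha\gt=\alpha^{-1}$) by syllable-matching on the reduced word of $\alpha$; this gives a self-contained derivation of the palindrome conditions that does not lean on the enumeration in \thmref{th}, and in case (1) it automatically lands on a cyclically reduced word, since the only boundary pattern compatible with the equation is $\io\g^{a_1}\cdots\io\g^{a_m}$ (your appeal to cyclic reducedness to exclude $\varepsilon_0=\varepsilon_m=1$ is not even needed --- a syllable count kills that case too). Your observation that an odd-length palindrome forces a middle exponent $r$, hence an internal $\gt$ and a second type of reciprocator, is a clean way to separate case (1) from case (3). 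The price of your route is that cases (2) and (3) remain sketches: converting the ``boundary exponents shifted by $r$'' into the two explicit $\io\gt\io$ blocks of form (2), and checking that the two listed placements of the extra $\gt$ in (3) exhaust the possibilities, are precisely the computations the paper performs by direct substitution, and they would have to be written out for your argument to be complete.
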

	\begin{proof} Since the type of the class representative $\alpha$ remains unchanged under taking powers, except in the case where $\alpha$ is a symmetric $p$-reciprocal element, it suffices to prove the result for a primitive reciprocal class $[\alpha]$.
		
		For proving (1), assume that the reciprocal word $ R $ from the reciprocal class $  [\alpha]$  is a primitive symmetric element:  $R=\io \eta \io\eta^{-1}$, where $ \eta \in \Gamma_p $. There are exactly three other symmetric elements: $R^{-1}$,  
		$ \eta^{-1} R\eta$ and $\eta^{-1} R^{-1}\eta$. We can write $ \eta $, as a word in terms of $ \io \g $. Let, $ \eta_{1}= \io\g^{k_{1}}\io\g^{k_{2}}\dots \io\g^{k_{n}}$, then there are four choices of $ \eta $ either $ \eta_{1}, \io\eta_{1}, \eta_{1}\io \text{ or } \io \eta_{1}\io. $\n Our claim is that any  two of $R,~R^{-1}$,  
		$ \eta^{-1} R\eta$ and $\eta^{-1} R^{-1}\eta$ elements provide the required forms for the choices of $\eta$. 
		We first assume $ \eta=\eta_{1} $. Therefore,  \[R=\io\io\g^{k_{1}}\io\g^{k_{2}}\dots \io\g^{k_{n}}\io (\io\g^{k_{1}}\io\g^{k_{2}}\dots \io\g^{k_{n}})^{-1}\]
		\[=\g^{k_{1}}\io\g^{k_{2}}\dots \io\g^{k_{n}}\io\g^{-k_{n}}\io\dots\g^{-k_{2}}\io\g^{-k_{1}}\io.\]
		The above implies that $R^{-1}$ is in the desired form. 
		
		Also note that $\eta ^{-1}R^{-1}\eta$ is in similar form: $\io \g^{-k_{n}}\dots\io  \g^{-k_{2}}\io \g^{-k_{1}}\io\g^{k_{1}}\io \g^{k_{2}}\dots \io \g^{k_{n}}$. Thus, when  $ \eta=\eta_{1} $, the desired form of $\alpha$ is given by  either $ R^{-1}=\eta\io\eta^{-1}\io $ or $ \eta^{-1}R^{-1}\eta  = \io\eta^{-1}\io\eta $.
		
		Similarly, for the other choices of $\eta$, the desired form of $\alpha$ may be chosen as follows.
		
		When $\eta=\io \eta_1$, $\alpha$ is either $R$ or $\eta^{-1} R^{-1}\eta$; 
		\\
		for $ \eta=\eta_{1}\io $, $ R=\io(\eta_{1}\io)\io(\eta_{1}\io)^{-1}=\io \eta_{1}\io\eta_{1}^{-1}. $ So, similar to the first case, $\alpha$ has two choices $ R^{-1} $ and $ \eta^{-1}R\eta $.
		
		For $ \eta=\io\eta_{1}\io, $ $ R=\io\io\eta_{1}\io\io\io\eta_{1}^{-1}\io=\eta_{1}\io\eta_{1}^{-1}\io .$ So, similar to the second case, $\alpha$ have two choices, $ R $ and $ \eta^{-1}R\eta. $

		For $(2)$, assume that $R$ is a primitive  $p$-reciprocal element from $[\alpha].$ By \thmref {th}, there are $2p$ $p$-reciprocal elements, but we choose the cyclically reduced one as $R$. Let $R=\gt\eta \gt\eta^{-1}$.  Then, again by similar arguments as  in (1), if we choose, $ \eta= \io\g^{k_{1}}\io\g^{k_{2}}\dots \io\g^{k_{n}}$ then, 
		\[R=\gt \io\g^{k_{1}}\io\g^{k_{2}}\dots \io\g^{k_{n}}\gt \g^{-k_{n}}\io\g^{-k_{(n-1)}}\dots \io\g^{-k_{1}}\io\]
		\[\imp R=\gt \io\g^{k_{1}}\io\g^{k_{2}}\dots \io \g^{k_{n-1}}\io\gt\io\g^{-k_{n-1}}\dots \io\g^{-k_{1}}\io\]  
		\[\imp \io R\io =  \io \gt\io \g^{k_{1}}\io \g^{k_{2}}\dots \io \g^{k_{n}}\io \gt\io \g^{-k_{n}}\dots\io \g^{-k_{2}}\io \g^{-k_{1}}=R', \hbox{say}.\]	Thus  $R'$ is in the form of $\io \gt\io \g^{-k_{n}}\dots\io \g^{-k_{2}}\io \g^{-k_{1}} \io \gt\io \g^{k_{1}}\io \g^{k_{2}}\dots \io \g^{k_{n}}  $.  And $R''= \io\eta^{-1} R\eta \io$ is also in the same form. Since, $R$ and $\eta^{-1}R\eta$ are $ p $-reciprocal elements, in both cases $ \g^{k}R\g^{-k} $ and $ \g^{k} \eta^{-1}R\eta\g^{-k} $  for $-r<k_i\leq r$, are also $ p $-reciprocal elements that completes the list of $ p $-reciprocal elements which are not cyclically reduced. So, we have two choices of $\alpha$ from the cyclically reduced words:  $ R' , ~R'',R'^{-1}, R''^{-1}$. For other choices of $\eta$, as noted above,  the arguments will be similar as in the proof of $(1)$. This proves (2).
		
		For $(3)$, by \thmref{th}, there are $p+2$ choices for $R$. As even powers of 
		$R$ may already appear in the previous forms, we now focus on the remaining cases.  It is easy to see that only two of them are in the desired form:  $ R=\gt\eta\io \eta^{-1} $  and $ R'=\eta^{-1}\gt \eta \io.$ The  others are either $ \g^{k}R\g^{-k} $ or $\io R'\io$ where $-r<k\leq r$. Let $\eta_1$ as  in  $(1)$, so the choices of $\eta$ are, $\eta_1$, $\io\eta_1$, $\eta_1 \io$, and $\io\eta_1\io$.  
		If we assume $\eta=\eta_1$, then $\io R\io= \io\gt\eta\io\eta^{-1} $ and $R'=\eta^{-1}\gt\eta\io $ are the choices for $\alpha$. 
		By similar arguments as in  $ (1) $, we obtain the two desired forms for the remaining choices of $\eta$. This proves $(3)$.
	\end{proof}
	\begin{lemma}
		Consider the set of all reciprocal classes with length $2l$ and reciprocators conjugate to $\iota$. Then, this set has cardinality
		$$\frac{1}{2}\sum_{q=0}^{\lceil\frac{l}{r+1}\rceil-1} \sum_{n= \lceil\frac{l-q}{r}\rceil }^{\lfloor \frac{l-(r-1)q}{2}\rfloor}\Psi_{n}^{r-1}(l-n-rq)2^{n-q} {n \choose q}.$$ 
	\end{lemma}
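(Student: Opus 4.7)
The strategy is to convert the count of reciprocal classes into a count of tuples via the normal form of Lemma~\ref{lem}(1), and then apply the counting Lemma~\ref{3.1} to evaluate the resulting sum. By Lemma~\ref{lem}(1), every reciprocal class whose reciprocators are conjugate to $\iota$ has a cyclically reduced representative of the palindromic form
\[
W(k_1,\ldots,k_n) \;:=\; \iota\gamma_p^{k_1}\iota\gamma_p^{k_2}\cdots\iota\gamma_p^{k_n}\iota\gamma_p^{-k_n}\cdots\iota\gamma_p^{-k_1},
\]
with $n\geq 1$ and each $k_i$ a nonzero integer satisfying $-r<k_i\leq r$. By Definition~\ref{word}, the word length of $W(k_1,\ldots,k_n)$ equals $2n+2\sum_{i=1}^n|k_i|$, so the length-$2l$ hypothesis is equivalent to the Diophantine condition $\sum_{i=1}^n|k_i|+n=l$.

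Next, I would invoke Lemma~\ref{3.1} with $x=l$: stratifying the admissible tuples $(k_1,\ldots,k_n)$ by the number $q$ of indices with $k_i=r$ (the unique value in the range without a sign ambiguity) and by the total number $n$ of entries, the lemma gives exactly the inner double sum appearing in the claim as the number of such tuples.

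Finally, the prefactor $\tfrac{1}{2}$ arises from a two-to-one correspondence between these tuples and reciprocal classes. As isolated in the proof of Lemma~\ref{lem}(1), once $\eta=\eta_1$ is fixed, a given class furnishes precisely two palindromic-form representatives $R^{-1}$ and $\eta^{-1}R^{-1}\eta$, which in tuple coordinates read $(k_1,\ldots,k_n)$ and $(-k_n,\ldots,-k_1)$. Halving the tuple count therefore produces the claimed formula. The step I expect to be most delicate is verifying that this flip pairing is essentially free and that distinct flip orbits produce distinct conjugacy classes: freeness for odd $n$ is immediate since a fixed tuple would force a middle $k_i=0$, which is forbidden, while distinctness of orbits reduces, via the free-product normal form in $\mathbb{Z}_2\ast\mathbb{Z}_p$, to the uniqueness of cyclically reduced representatives up to cyclic rotation.
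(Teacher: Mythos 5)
Your argument follows the paper's proof essentially verbatim: both reduce to the normal form of Lemma~\ref{lem}(1), translate the length-$2l$ condition into $\sum_{i=1}^{n}|k_i|+n=l$, count the admissible tuples by stratifying over $q$ and $n$ exactly as in Lemma~\ref{3.1}, and divide by two because each class furnishes the two palindromic representatives with tuples $(k_1,\dots,k_n)$ and $(-k_n,\dots,-k_1)$. The only delicate point you flag --- possible fixed tuples of the flip when $n$ is even, which occur for non-primitive classes --- is not addressed in the paper's proof either, so your writeup is, if anything, slightly more explicit about where the halving argument needs care.
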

	\begin{proof}
			Consider the symmetric reciprocal word $ R= \io\g^{k_{1}}\io \g^{k_{2}}\dots \io \g^{k_{n}}\io \g^{-k_{n}}\dots\io \g^{-k_{2}}\io \g^{-k_{1}} $, where, $-r< k_{i}\leq r $ for all $1\leq i\leq n$. We see that 
			\[ \g^{-k_{n}}\io \dots \g^{-k_{1}}\io R\io\g^{k_{1}}\io \g^{k_{2}}\dots \io \g^{k_{n}}=\io \g^{-k_{n}}\dots\io \g^{-k_{2}}\io \g^{-k_{1}}\io\g^{k_{1}}\io \g^{k_{2}}\dots \io \g^{k_{n}} .\]
			Let  $R'=\io \g^{-k_{n}}\dots\io \g^{-k_{2}}\io \g^{-k_{1}}\io\g^{k_{1}}\io \g^{k_{2}}\dots \io \g^{k_{n}}$. Then 
			$ R $ and $ R' $ are of the form mentioned in \lemref{lem}. Since, given word length of $ R $ and  $ R' $ is $ 2l $, we have 
			\[\sum^n_{i=1} 2|k_i|+2n=2l ~\imp ~ \sum_{i=1}^{n} |k_{i}|=l-n, 		\hbox{ where } -r<k_{i}\leq r. \]
			Let $ q $ be the number of $ k_{i} $'s such that $k_i=r$. After rewriting the indices in the above equation, we get \[ \sum_{i=1}^{n-q} |k_{i}|=(l-n-rq). \]
			Then, by the similar arguments from \lemref{3.1},
			the lower bound of $ n $ is $ \lceil\frac{l-q}{r}\rceil $ and 
			the upper bound for $ n $ is $ \lfloor \frac{l-(r-1)q}{2}\rfloor $. Also, $ q $ varies from $ 0 $ to $\lceil\frac{l}{r+1}\rceil-1$, excluding the symmetric $ p $-reciprocal element $ (\io \gt)^{m} $ where, $ m\in \mathbb{Z} $. Now, using \lemref{lem}, \lemref{3.1} we obtain that the number of the reciprocal symmetric classes is  $$\frac{1}{2}\sum_{q=0}^{\lceil\frac{l}{r+1}\rceil-1} \sum_{n= \lceil\frac{l-q}{r}\rceil }^{\lfloor \frac{l-(r-1)q}{2}\rfloor}\Psi_{n}^{r-1}(l-n-rq)2^{n-q} {n \choose q}.$$
			This proves the assertion. 
		\end{proof}
		
		\medskip 
		\begin{lemma}
			Consider the set of all  $p$-reciprocal classes with length $2l$. Then, this set has cardinality
			$$  \frac{1}{2}\sum_{q=0}^{(\lceil\frac{l}{r+1}\rceil-2)} \sum_{n=\lceil \frac{l-(r+1)-q}{r}\rceil}^{\lfloor\frac{l-1-(r+1)q-r}{2}\rfloor} \Psi_{n}^{r-1}(l-(n+1)-(q+1)r)2^{n-q} {n\choose q} ~ \hbox{ where }p=2r.$$
			
		\end{lemma}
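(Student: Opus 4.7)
The strategy parallels the proof of the preceding lemma, now applied to the $p$-reciprocal normal form from \lemref{lem}(2). Pick a cyclically reduced representative
\begin{equation*}
W \;=\; \io\,\gt\,\io\,\g^{k_1}\,\io\,\g^{k_2}\cdots\io\,\g^{k_n}\,\io\,\gt\,\io\,\g^{-k_n}\cdots\io\,\g^{-k_1},
\end{equation*}
with $-r<k_j\leq r$ and $k_j\neq 0$. Reading off the word length via \defref{word}, $W$ contains $2n+2$ occurrences of $\io$ (each of length $1$), two occurrences of $\gt=\g^r$, and $2\sum_{j=1}^{n}|k_j|$ contributed by the remaining powers of $\g$. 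Equating the total to $2l$ yields the arithmetic constraint $\sum_{j=1}^{n}|k_j|=l-(n+1)-r$.

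Next, I stratify by $q=\#\{\,j:k_j=r\,\}$ and apply the enumeration scheme of \lemref{3.1}. For fixed $n$ and $q$: the binomial $\binom{n}{q}$ counts the positions of the $q$ entries equal to $r$, the factor $2^{n-q}$ accounts for the independent sign choices at the remaining positions, and $\Psi_{n}^{r-1}(l-(n+1)-(q+1)r)$ counts the ways of distributing the residual sum $l-(n+1)-(q+1)r$ over the $n-q$ absolute values in $\{1,\ldots,r-1\}$. The range of $n$ then comes from the usual extremal bounds: requiring each non-$r$ entry to be at least $1$ caps $n$ from above by $\lfloor(l-1-(r+1)q-r)/2\rfloor$, while requiring each such entry to be at most $r-1$ bounds $n$ from below by $\lceil(l-(r+1)-q)/r\rceil$. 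The outer parameter $q$ runs from $0$ to $\lceil l/(r+1)\rceil-2$; this upper truncation removes the degenerate configuration $(r,\ldots,r)$, for which $W$ collapses to a power of $\io\gt$ and by \thmref{th}(3)(ii) lies in the symmetric $p$-reciprocal family rather than in the pure $p$-reciprocal classes being counted here.

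Finally, to pass from tuples to conjugacy classes I observe that the cyclic rotation of $W$ that interchanges its two halves flanking the inner $\gt$ carries the tuple $(k_1,\ldots,k_n)$ to $(-k_n,\ldots,-k_1)$. Hence generically each class is represented by exactly two tuples, which produces the prefactor $\tfrac{1}{2}$. Assembling these three ingredients yields the asserted identity. The main technical point is this last step: one must match the four cyclically reduced representatives $R',R'',R'^{-1},R''^{-1}$ produced in the proof of \lemref{lem}(2) with exactly two normal-form tuples per class, and verify that no further identifications arise beyond the involution $(k_j)\leftrightarrow(-k_{n+1-j})$ (the fixed locus being negligible, corresponding to non-primitive classes).
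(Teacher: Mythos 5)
Your proposal is correct and follows essentially the same route as the paper's own proof: read off the length equation $\sum_{j}|k_j|=l-(n+1)-r(q+1)$ from the normal form of \lemref{lem}(2), stratify by the number $q$ of entries equal to $r$, invoke the counting scheme of \lemref{3.1} to get the factor $\Psi_{n}^{r-1}(l-(n+1)-(q+1)r)\,2^{n-q}\binom{n}{q}$ together with the ranges of $n$ and $q$, and divide by $2$ because each class is represented by the two tuples $(k_1,\dots,k_n)$ and $(-k_n,\dots,-k_1)$. One caveat, which you share with the paper's own derivation: the extremal condition ``all non-$r$ entries equal to $1$'' gives $2n=l-1-r-(r-1)q$, i.e.\ an upper limit $\lfloor\frac{l-1-r-(r-1)q}{2}\rfloor$, not $\lfloor\frac{l-1-(r+1)q-r}{2}\rfloor$ as you (and the statement) assert, so for $q>0$ the claimed upper limit does not follow from the argument as written and would omit admissible values of $n$.
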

		\begin{proof}
			Let $R$ be a $p$-reciprocal element of this form: $ \io \gt\io \g^{k_{1}}\io \g^{k_{2}}\dots \io \g^{k_{n}}\io \gt\io \g^{-k_{n}}\dots\io \g^{-k_{2}}\io \g^{-k_{1}}$, see, \lemref{lem}.
			Therefore,
			\[2\sum^{n-q}_{i=1}k_i+2n+2rq+2(r+1)=2l\]
			\begin{equation}
				\imp	\sum_{i=1}^{n-q} |k_{i}|+n+rq+ r+1=l, 
			\end{equation} where $ {|k_i|}\leq r-1$. Then, 
			\[ \sum_{i=-1}^{n-q}{|k_{i}|}+(n+1)+r(q+1)=l\]
			\[\imp \sum_{i=1}^{n-q} |k_{i}|=l-r(q+1)-(n+1).\]
			So, with the help of the last expression, we can deduce the total number of conjugacy classes. The remaining part is to show the upper and lower limits of $ n $ in the sum. 
			For the lower limit, 
			\[\sum_{i=1}^{n-q}|k_{i}|\leq (n-q)(r-1)\]
			\[\imp l-r(q+1)-(n+1) \leq (n-q)(r-1)\]
			\[\imp  l-r(q+1)-(n+1) \leq nr-n-qr+q \]
			\[\imp l-(r+1)-q\leq nr\]
			\[\imp \frac{l-(r+1)-q}{r}\leq n.\]
			So, the lower bound of $ n $ is $ \lceil \frac{l-(r+1)-q}{r}\rceil .$
			And the upper bound when all $ k_{i} $'s have the lowest value, which is $ 1 $, i.e.,
			\[n-q=l-(n+1)-r(q+1)\]
			\[\imp n=\frac{l-1-(r+1)q-r}{2}.\]
			So, the upper bound of $n$ is $ \lfloor\frac{l-1-(r+1)q-r}{2}\rfloor. $ And, $q$ varies from $0$ to $\lceil\frac{l-r-1}{r+1}\rceil-1=\lceil\frac{l}{r+1}\rceil-2$.  Then by \lemref{3.1},\lemref{lem}, the number of $ p $-reciprocal classes where the reciprocators are conjugate of $\gt$, is 
			$$  \frac{1}{2}\sum_{q=0}^{(\lceil\frac{l}{r+1}\rceil-2)} \sum_{\lceil \frac{l-(r+1)-q}{r}\rceil}^{\lfloor\frac{l-1-(r+1)q-r}{2}\rfloor} \Psi_{n}^{r-1}(l-(n+1)-(q+1)r)2^{n-q} {n\choose q}.$$
			This proves the assertion. 
		\end{proof}
		\begin{lemma}\label{lem1}
			The number of symmetric $p$-reciprocal classes that does not contain any non-trivial power of $ \io\gt $ of word length $ 2l $, resp.  $2l+1$,  is $$\frac{1}{2}\sum_{q=0}^{\lceil\frac{l-u}{r+1}\rceil-1} \sum_{n= \lceil\frac{l-u-q}{r}\rceil }^{\lfloor \frac{l-u-(r-1)q}{2}\rfloor}\Psi_{n}^{r-1}(l-u-n-rq)2^{n-q} {n \choose q} $$ where, $r=2u $, resp. $r=2u+1$.
		\end{lemma}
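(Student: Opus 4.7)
My approach is to mirror the two preceding lemmas of this section: start from the normal form provided by Lemma 3.2(3)(i), translate the word-length condition into a restricted partition equation, and then invoke Lemma 3.1 after a shift of the parameter.

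By Lemma 3.2(3)(i), each class in our set has a cyclically reduced representative of the form
\[
R \;=\; \io\gt\io\g^{k_1}\io\g^{k_2}\cdots\io\g^{k_n}\io\g^{-k_n}\cdots\io\g^{-k_1},\qquad -r<k_i\le r,
\]
or the cyclically rotated form $R'=\io\g^{k_1}\cdots\io\g^{k_n}\io\gt\io\g^{-k_n}\cdots\io\g^{-k_1}$. Arguing exactly as in the first lemma of this section, $R$ and $R'$ are cyclic conjugates and hence determine the same class; each class therefore contributes two representatives to our enumeration, producing the leading $\frac{1}{2}$. The word length of $R$ is
\[
|R| \;=\; 1+r+2n+2\sum_{i=1}^n|k_i|,
\]
coming from the initial $\io$, the single block $\gt=\g^r$, and the $2n$ syllables $\io\g^{\pm k_i}$ each of length $1+|k_i|$. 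Setting $|R|$ equal to the prescribed word length and writing $r=2u$ or $r=2u+1$ accordingly, in both cases the length constraint collapses, after a parity check, to the single Diophantine equation
\[
\sum_{i=1}^n|k_i|+n \;=\; l-u, \qquad -r<k_i\le r.
\]

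This is precisely equation (2.1) of Lemma 3.1 with $x$ replaced by $l-u$. Invoking Lemma 3.1 in this form yields the double sum, in which $q$ records how many of the $k_i$ equal $r$, the binomial $\binom{n}{q}$ places them, the factor $2^{n-q}$ accounts for the signs of the remaining $k_i$'s, and $\Psi_n^{r-1}(l-u-n-rq)$ counts the admissible magnitudes bounded by $r-1$. Dividing by $2$ to compensate for the two-fold multiplicity coming from the two normal forms gives the stated formula. The exclusion of classes containing a nontrivial power of $\io\gt$ is automatic, since those classes fall under case (3)(ii) of Theorem 3.1 and do not admit a representative of the shape above with a symmetric tail $\io\g^{k_1}\cdots\io\g^{-k_1}$ present. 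The main delicate point I anticipate is the parity bookkeeping that pairs the two residues of $r$ with the corresponding target length $2l$ or $2l+1$ and justifies the clean uniform substitution $x\mapsto l-u$; once this is pinned down, the rest is a direct transcription of Lemma 3.1.
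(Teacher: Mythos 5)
Your argument is essentially the paper's own proof: the same normal form from \lemref{lem}(3), the same word-length count $1+r+2n+2\sum_{i}|k_i|$ collapsing to $\sum_i|k_i|+n=l-u$, the same appeal to \lemref{3.1} to produce the double sum in $q$ and $n$, and the same factor $\tfrac12$ accounting for the two cyclically conjugate representatives. The parity bookkeeping you flag as the delicate point is treated no more carefully in the paper (the odd-$r$ case is dispatched by the substitution $u=\lfloor\frac{r-1}{2}\rfloor$ with the claim that the same count results), so your proposal matches the published argument in both substance and level of detail.
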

		\begin{proof}
			Suppose $R$ is the cyclically reduced representation of the mentioned reciprocal class, then \begin{equation} R= \io \gt\io \g^{k_{1}}\io \g^{k_{2}}\dots \io \g^{k_{n}}\io \g^{-k_{n}}\dots\io \g^{-k_{2}}\io \g^{-k_{1}}.\end{equation} 
			Here, we first consider  $r=2u$. Then
			\begin{equation}
				r+1+ 2\sum_{i=1}^{n} |k_{i}| +2n=2l+1
			\end{equation} where $k_{i}\leq r-1$ for all $i$. Then implies,
			\[	 2\sum_{i=1}^{n} |k_{i}| +2n=2(l-u) \]
			\[\imp 	 \sum_{i=1}^{n} |k_{i}| +n=l-u. \]
			Also, $R$ is conjugate to $\io  \g^{k_{1}}\io \g^{k_{2}}\dots \io \g^{k_{n}}\io\gt\io \g^{-k_{n}}\dots\io \g^{-k_{2}}\io \g^{-k_{1}}$, so there are two choices of words for the required form.
			Now, the process is the same as the symmetric element case. So, the number of the reciprocal class is $\frac{1}{2}\sum_{q=0}^{\lceil\frac{l-u}{r+1}\rceil-1} \sum_{n= \lceil\frac{l-u-q}{r}\rceil }^{\lfloor \frac{l-u-(r-1)q}{2}\rfloor}\Psi_{n}^{r-1}(l-u-n-rq)2^{n-q} {n \choose q} $.

			Similarly, for the odd case, if we choose $u=\lfloor \frac{r-1}{2}\rfloor$, we will get same number of conjugacy classes. 		\end{proof}

		\medskip 		The only remaining case when the reciprocal class is represented by a non-trivial power of $\io\gt $, follow from \lemref{lem}. Combining all these, we have the following theorem. 
		\begin{thm}\label{count1}
			Let $\Gamma_{p}$ be the Hecke group with the presentation $\langle \io,\g~|~\io^{2},\g^{p}\rangle$ and $p=2r.$ Then the following statements hold:
			\begin{enumerate}
				\item The number of symmertric reciprocal classes of word length $2l$ is, 	$$\frac{1}{2}\sum_{q=0}^{\lceil\frac{l}{r+1}\rceil-1} \sum_{n= \lceil\frac{l-q}{r}\rceil }^{\lfloor \frac{l-(r-1)q}{2}\rfloor}\Psi_{n}^{r-1}(l-n-rq)2^{n-q} {n \choose q}.$$ 
				\item The number of $p$-reciprocal classes of word length $2l$ is,
				$$  \frac{1}{2}\sum_{q=0}^{(\lceil\frac{l}{r+1}\rceil-2)} \sum_{n=\lceil \frac{l-(r+1)-q}{r}\rceil}^{\lfloor\frac{l-1-(r+1)q-r}{2}\rfloor} \Psi_{n}^{r-1}(l-(n+1)-(q+1)r)2^{n-q} {n\choose q} .$$
				
				\item The number of symmetric $ p$-reciprocal classes of word length $2l$, resp. $2l+1$ depending on $r$ is,
				$$\frac{1}{2}\sum_{q=0}^{\lceil\frac{l-u}{r+1}\rceil-1} \sum_{n= \lceil\frac{l-u-q}{r}\rceil }^{\lfloor \frac{l-u-(r-1)q}{2}\rfloor}\Psi_{n}^{r-1}(l-u-n-rq)2^{n-q} {n \choose q} +1$$ where, $r=2u $, resp. $r=2u+1.$
			\end{enumerate}
		\end{thm}
		\begin{proof}
			The proof of this proposition follows from the previous lemmas.
		\end{proof}
		\section{Spherical growth  of the Reciprocal Classes}\label{4}
		In this section, we prove our main result. 
		\subsection{When $r$ is odd }
		In this case, $r=2u+1$ when the reciprocal classes always have even word lengths, since the words representing the reciprocal classes all have even word length. 
		
		\begin{lemma}\label{count}
			Let $\mathcal{N}_{2l}$ be the set of reciprocal classes of word length $2l$, then the following condition will hold.
			\begin{enumerate}
				\item If $l\leq r$, then the number of reciprocal classes is $ \frac{1}{6}(2^{l}+2(-1)^{l})$.
				\item If $l=r+1$, the number of reciprocal classes is $\frac{1}{6}(2^{l}+2(-1)^{l})+\frac{ 1}{6}(2^{u+1}+2(-1)^{u+1})-1.$
				\item If $l\geq r+2$, 
				\[|\mathcal{N}_{2l}|=2\sum_{j=1}^{r-1}|\mathcal{N}_{2(l-j-1)}|+|\mathcal{N}_{2(l-r-1)}|.\]
			\end{enumerate} 
		\end{lemma}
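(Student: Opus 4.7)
The plan is to use Proposition~3.4 to express $|\mathcal{N}_{2l}| = S(l) + P(l) + SP(l)$, where the summands count symmetric, $p$-reciprocal and symmetric $p$-reciprocal classes respectively, and then analyze each regime separately.

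For part~(1), the key simplification is that when $l \leq r$ the constraint $p_i \leq r-1$ defining $\Psi_n^{r-1}$ is automatically satisfied: any positive solution of $\sum p_i = l-n$ has $p_i \leq l - n \leq r - 1$, so $\Psi_n^{r-1}(l-n) = \binom{l-n-1}{n-1}$. Moreover the $q$-sum collapses to $q = 0$, the range defining $P(l)$ is empty, and the non-trivial part of $SP(l)$ also has no solutions. Thus $|\mathcal{N}_{2l}|$ reduces to $\tfrac{1}{2}b_l$ where $b_l := \sum_{n\geq 1}\binom{l-n-1}{n-1}2^n$. I would evaluate $b_l$ via its generating function $\sum_l b_l\, x^l = 2x^2/(1-x-2x^2)$, which produces the recurrence $b_l = b_{l-1} + 2b_{l-2}$ with characteristic roots $2$ and $-1$ and initial data $b_2 = b_3 = 2$; the closed form $b_l = \tfrac{1}{3}(2^l + 2(-1)^l)$ then yields the stated identity.

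For part~(2), the threshold $l = r+1$ is precisely the first length at which the symmetric $p$-reciprocal count and the power $(\iota\gt)^k$ contribute. The first summand $\tfrac{1}{6}(2^l + 2(-1)^l)$ again comes from $S(l)$ by the same computation. For the $SP$-term, factoring out the fixed block $\iota\gt$ of length $r+1$ reduces the remaining combinatorics to the symmetric sum with the effective length parameter shifted down to $u+1$, producing $\tfrac{1}{6}(2^{u+1} + 2(-1)^{u+1})$. The correction $-1$ accounts for the distinguished representative $(\iota\gt)^k$ of Lemma~3.2(3), which the shifted sum records once but which the ``$+1$'' convention in Proposition~3.4(3) has already recorded separately.

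For part~(3), the plan is to derive the recursion through a ``first/last block removal'' map. By Lemma~3.2, a cyclically reduced representative of a reciprocal class of length $2l$ begins with a block $\iota\g^{k_1}$ with $1 \leq |k_1| \leq r$, and the symmetric structure forces its terminal block to be determined. Deleting these two blocks sends the class to a reciprocal class of length $2l - 2(|k_1|+1) = 2(l-|k_1|-1)$. For each $j \in \{1,\ldots,r-1\}$ the two sign choices $k_1 = \pm j$ contribute a factor of $2$, while $k_1 = r$ (i.e.\ $\iota\gt$) is the unique ``special'' block, yielding
\[ |\mathcal{N}_{2l}| = 2 \sum_{j=1}^{r-1} |\mathcal{N}_{2(l-j-1)}| + |\mathcal{N}_{2(l-r-1)}|, \]
the hypothesis $l \geq r+2$ being used to ensure $l - r - 1 \geq 1$. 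The main obstacle will be this step: one must verify that the induced map on \emph{classes} is a well-defined bijection, that the three types of reciprocal classes (symmetric, $p$-reciprocal, symmetric $p$-reciprocal, together with the $(\iota\gt)^k$ powers) all decompose uniformly under it, and that the choice of cyclic representative does not affect the outcome. Parts (1) and (2) are essentially algebraic bookkeeping once the correct identifications are made, whereas (3) will require the full combinatorial content of Lemma~3.2.
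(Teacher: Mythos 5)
Your proposal follows essentially the same strategy as the paper's proof: decompose $\mathcal{N}_{2l}$ by reciprocator type, reduce the small-$l$ cases to the symmetric count, and obtain the recursion in part (3) by conditioning on the first block $\iota\gamma_p^{k_1}$, with a factor of $2$ for the sign of $k_1$ when $|k_1|<r$ and a single contribution when $k_1=r$ --- this is exactly the paper's decomposition $|A_{2l}|=2\sum_{j=1}^{r-1}|A_{2(l-j-1)}|+|A_{2(l-r-1)}|$ applied to each of the three sets $A_{2l},B_{2l},C_{2l}$. The one genuine difference is in part (1): where the paper simply observes that for $l\le r$ the count coincides with the odd-$p$ case and cites Marmolejo's Theorem 5.3.2 for the closed form $\tfrac{1}{3}(2^{l}+2(-1)^{l})$, you derive it self-containedly by evaluating $b_l=\sum_{n}\binom{l-n-1}{n-1}2^{n}$ through the generating function $2x^{2}/(1-x-2x^{2})$ and the recurrence $b_l=b_{l-1}+2b_{l-2}$; this buys independence from the external reference at the cost of a short computation, and it correctly exploits the fact that the constraint $p_i\le r-1$ in $\Psi_n^{r-1}$ is vacuous in this range. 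For part (2) your bookkeeping of the extra $\tfrac{1}{6}(2^{u+1}+2(-1)^{u+1})$ term and the $-1$ correction matches the paper's case analysis in $(\epsilon,q)$, where the only configuration with $q=1$ is the single class of $(\iota\tilde{\gamma})^{2}$. Your explicit flag in part (3) --- that one must check the block-removal map is a well-defined bijection on conjugacy classes independent of the chosen cyclic representative --- is a point the paper passes over silently, so addressing it would only strengthen the argument.
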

		\begin{proof}
			For $(1)$, it is given that  $l\leq r$.  There is only a symmetric reciprocal class; otherwise, the length will exceed $2r$.
			So, the conditions to determine the cardinality of $\mathcal{N}_{2l}$ for $\z_{2}*\z_{p}$ is same as the cardinality of $\mathcal{N}_{2l}$ for $\z_{2}*\z_{p-1}$. Since, $p-1$ is odd, then from \cite[Theorem 5.3.2.]{Ma}, the cardinality of the cyclically reduced reciprocal words is 
			\[\frac{1}{3}(2^{l}+2(-1)^{l})\]
			Since each reciprocal class has two cyclically reduced reciprocal words, this proves $(1)$.			
			
			For $(2)$,
			again, there is no $p$-reciprocal class in $\mathcal{N}_{2l}$ otherwise the word length will exceed $2(r+1).$ So, the conjugacy classes are either symmetric reciprocal classes or primitive symmetric $p $-reciprocal classes. Let $R$ be the representative of a conjugacy class in $\mathcal{N}_{2l}$ as mentioned in the proof of the \ref{lem}. Then, \[R=(\io\gt)^{\epsilon}\io  \g^{k_{1}}\io \g^{k_{2}}\dots \io \g^{k_{n}}\io \g^{-k_{n}}\dots\io \g^{-k_{2}}\io \g^{-k_{1}} \] where, $\epsilon$ is $0$ or $1$ on the respective choice of symmetric reciprocal class or symmetric $ p$-reciprocal class.  In any case, we get this equation for the word length $2l$,
			\begin{equation}
				2\sum_{i=1}^{n}|k_{i}| +2n+\epsilon(r+1)= 2l.
			\end{equation}
			Let us consider $q$ as the number of $ k_i$'s such that they are equal to $r.$ So,  after rewriting the $k_{i}$'s, we have,
			\begin{equation}
				2\sum_{i=1}^{n}|k_{i}| +2n+2qr+\epsilon(r+1)= 2l
			\end{equation} where, $k_{i}\leq r-1$ and $q\geq 0$, $n>0$ and $r=2u+1. $ 
			This implies, \[2\sum_{i=1}^{n-q}|k_{i}| +2n+2qr+\epsilon(r+1)= 2(r+1).\]
			Three disjoint cases may arise.
			
			\medskip  \tb{Case 1:}	If $\epsilon=0$ and $q=0$,
			then, the element will belong to symmetric classes in $\mathcal{N}_{2l}.$ In that case, we can get $|\mathcal{N}_{2l}|$ from \cite[Theorem 5.3.2.]{Ma} i.e., $\frac{ 1}{3}(2^{l}+2(-1)^{l})-2$ using same fact as $(1)$. 
			
			\tb{Case 2:} 	If	$\epsilon=1$ and $q=0$, this implies, \[2\sum_{i=1}^{n}|k_{i}| +2n+(r+1)= 2(r+1).\]
			\[\imp 2\sum_{i=1}^{n}|k_{i}| +2n+=(r+1) \]
			Therefore, $\sum_{i=1}^{n}|k_{i}| +n=(u+1)$, 
			where $k_{i}\leq r-1.$ Since, $u+1\leq r$ then, from $(1)$ we obtain the value, $\frac{ 1}{3}(2^{u+1}+2(-1)^{u+1})$,  where $u=\frac{r-1}{2}.$

			\tb{Case 3:} If $q=1$ and $\epsilon=1,$ this implies
			\[ 2\sum_{i=1}^{n-1}|k_{i}| +2n+2r= 2(r+1)\]
			\[\imp\sum_{i=1}^{n-1}|k_{i}| +n+r= (r+1)\]
			\[\imp \sum_{i=1}^{n}|k_{i}|=(1-n). \]
			Since absolute values are non-negative, then $n$ and $q$ must be $1$, and all $k_{i}$'s are $0.$ Then $R=\io\gt\io\gt$. So, there is only one element for the case $q>0.$		
			Then the required number is $\frac{ 1}{3}(2^{l}+2(-1)^{l})+\frac{ 1}{3}(2^{u+1}+2(-1)^{u+1})-1.$
			It follows the proof of (2).
			
			For(3), it suffices to consider primitive case since all arguments are based on the powers of $ \g .$ We divide $\mathcal{N}_{2l}$ into $3$ disjoint sets such as $A_{2l},B_{2l},C_{2l}$ such that,
			$A_{2l}, ~B_{2l},~C_{2l}$ represent the set of symmetric reciprocal classes, $p$-reciprocal classes and symmetric $p$-reciprocal classes respectively. We can see that the set $A_{2l}$ is a collection of the reduced words whose primitive elements are symmetric of the form  $  \io\g^{k_{1}}\io \g^{k_{2}}\dots \io \g^{k_{n}}\io \g^{-k_{n}}\dots\io \g^{-k_{2}}\io \g^{-k_{1}} $ where, $-r<k_{i}\leq r$. Then, the set $A_{2l,j}=\io\g^{j}\io \g^{k_{2}}\dots \io \g^{k_{n}}\io \g^{-k_{n}}\dots\io \g^{-k_{2}}\io \g^{-j}$  gives the equation,
			\[ 2+2|j|+2\sum_{i=2}^{n}|k_{i}|+2n=2l\]
			where, $-r<k_i,~ j\leq r$ .
			This implies, \[\sum_{i=2}^{n}|k_{i}|+n=l-|j|-1.\]
			So, $|A_{2l,j}|=|A_{2(l-|j|-1)}|$. Now, varying  and $|j|$ over varies in the set $ \{r,r-1,\dots, 1\} $, then we have this following condition will hold,
			\[|A_{2l}|=2\sum_{j=1}^{r-1}|A_{2(l-|j|-1)}|+|A_{l-r-1}|.\]
			Here, $ 2 $ appears for both signs of $k_{i}$'s.
			Similarly, we have the following equalities.
			$$|B_{2l}|=2\sum_{j=1}^{r-1}|B_{2(l-j-1)}|+|B_{l-r-1}|, ~~ |C_{2l}|=2\sum_{j=1}^{r-1}|C_{2(l-j-1)}|+|C_{l-r-1}|.$$ 
			Therefore, we have
			\[|\mathcal{N}_{2l}|=2\sum_{j=1}^{r-1}|\mathcal{N}_{2(l-j-1)}|+|\mathcal{N}_{l-r-1}|.\]
			This proves the lemma. 
		\end{proof}
		
		\medskip The characteristic equation of the given recurrence relation, 
		$|\mathcal{N}_{2l}|=2\sum_{j=1}^{r-1}|\mathcal{N}_{2(l-j-1)}|+|\mathcal{N}_{2(l-r-1)}|$ is
		\begin{equation}
			x^{l}=2\sum_{j=1}^{r-1}x^{l-j-1}+x^{l-r-1}.
		\end{equation} The non-zero solutions from the above equations, will also satisfy the following equation:
		\begin{equation} \label{re} x^{r+1}-2\sum_{j=1}^{r-1}x^{r-j}-1=0.\end{equation}

		By Descarte's rule of signs, there is only one real positive root. Now, we use the following theorems by Gauss to investigate the roots of \eqnref{re}. 
		\begin{theorem}\label{4.2}
			Every real root of a monic polynomial with integer coefficients is either an integer or irrational.
		\end{theorem}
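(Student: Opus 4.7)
The plan is to prove the contrapositive: if a real root of a monic integer polynomial is rational, then it must in fact be an integer. This is the classical rational root theorem specialised to the monic case, and the proof is a short divisibility argument resting on unique factorisation in $\mathbb{Z}$.

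First, I would let $f(x) = x^n + a_{n-1} x^{n-1} + \cdots + a_1 x + a_0 \in \mathbb{Z}[x]$ be monic, and suppose $\alpha = p/q$ is a real rational root written in lowest terms with $q \geq 1$ and $\gcd(p,q) = 1$. Substituting into $f(\alpha) = 0$ and clearing the denominator $q^n$ yields
\[ p^n + a_{n-1} p^{n-1} q + a_{n-2} p^{n-2} q^2 + \cdots + a_0 q^n = 0. \]
Isolating the leading term on one side gives
\[ p^n = -q\bigl( a_{n-1} p^{n-1} + a_{n-2} p^{n-2} q + \cdots + a_0 q^{n-1} \bigr), \]
so $q \mid p^n$ in $\mathbb{Z}$.

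Next, using the coprimality $\gcd(p,q) = 1$, any prime divisor of $q$ would also have to divide $p$, which is impossible unless $q$ has no prime divisors at all. Hence $q = 1$, and $\alpha = p$ is an integer. Contrapositively, any real root which is not an integer cannot be rational, that is, it is irrational.

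There is no substantive obstacle; the only point worth flagging is where monicity enters. If the leading coefficient were some $a_n \neq 1$, the analogous rearrangement would only force $q \mid a_n$ (the general rational root theorem), and the conclusion $q = 1$ would fail in general. It is precisely the leading coefficient being $1$ that reduces the divisibility condition to $q \mid 1$, and the statement is tailored to exploit this. This is exactly the form needed in order to apply it to the polynomial $p(x) = x^{r+1} - 2\sum_{j=1}^{r-1} x^{r-j} - 1$ appearing in \eqnref{re}, which is monic with integer coefficients.
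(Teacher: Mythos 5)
Your proof is correct: it is the standard rational root theorem argument for monic polynomials (clear denominators, deduce $q \mid p^n$, use coprimality to force $q=1$), and your remark about where monicity enters is exactly the right point to flag. The paper states this result without proof as a classical fact attributed to Gauss, so there is no competing argument to compare against; yours is the expected one.
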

		
		\begin{defn}
			Let $f(z)=z^n+a_{n-1}z^{n-1}+a_{n-2}z^{n-2}+\dots +a_1 z+a_0$ be an monic  with complex coefficients. Then, the \textit{Cauchy bound} of $f$, say $r$, is the unique positive root of the mentioned polynomial. 
		\end{defn}
		\begin{thm}\label{4.4}
			Let  $f(z)=z^n+a_{n-1}z^{n-1}+a_{n-2}z^{n-2}+\dots +a_1 z+a_0$  be an $n$-degree polynomial with complex coefficients. Then all zeros of $f(z)$ have modulus less than or equal to $r.$
		\end{thm}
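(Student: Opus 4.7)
The plan is to apply the triangle inequality directly to the equation $f(z_{0})=0$ and then compare the resulting real inequality with the associated monic polynomial whose unique positive root is, by the preceding definition, the Cauchy bound $r$.

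First I would let $z_{0}\in\mathbb{C}$ be any root of $f$. From $f(z_{0})=0$ one rewrites $z_{0}^{n}=-\sum_{k=0}^{n-1}a_{k}z_{0}^{k}$, and the triangle inequality yields $|z_{0}|^{n}\leq\sum_{k=0}^{n-1}|a_{k}|\,|z_{0}|^{k}$. Setting $g(x):=x^{n}-\sum_{k=0}^{n-1}|a_{k}|x^{k}$, this is precisely the scalar inequality $g(|z_{0}|)\leq 0$, which reduces the whole problem to a one-variable sign analysis of the real polynomial $g$.

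Next I would analyse $g$. It is monic of degree $n$ with every non-leading coefficient non-positive, so its coefficient sequence has exactly one sign change; Descartes' rule of signs then gives that $g$ has a unique positive real zero, which by the definition in the excerpt must be $r$. Since $g$ is continuous with $g(x)\to+\infty$ as $x\to+\infty$ and $r$ is its only positive root, $g$ is strictly positive on $(r,\infty)$. Combined with $g(|z_{0}|)\leq 0$, this forces $|z_{0}|\leq r$, exactly as claimed.

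The main obstacle here is interpretive rather than technical: the preceding definition refers to ``the mentioned polynomial'' without writing it explicitly, and I am reading this as the associated Cauchy polynomial $g$ constructed above, which is the standard convention in the literature. Once this identification is accepted, the argument is essentially a single application of the triangle inequality together with the observation that $g$ changes sign exactly once on the positive real axis, so no serious technical difficulty arises.
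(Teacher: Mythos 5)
Your argument is correct: it is the standard proof of the Cauchy bound (triangle inequality applied to $z_0^n=-\sum_k a_kz_0^k$, followed by the sign analysis of the associated real polynomial $g(x)=x^n-\sum_k|a_k|x^k$, whose unique positive root is $r$), and your reading of ``the mentioned polynomial'' in the preceding definition as this associated polynomial is the right one. The paper states this theorem as a classical fact and gives no proof, so there is nothing to compare against; your write-up supplies exactly the missing standard argument.
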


		\begin{lemma}\label{2.7}
			The following recurrence relation   \begin{equation}\label{2.7}
				|\mathcal{N}_{2l}|=2\sum_{j=1}^{r-1}|\mathcal{N}_{2(l-j-1)}|+|\mathcal{N}_{2(l-r-1)}|\end{equation}  has a convergent limit in between $\sqrt{2}$ and $2$.
			
			Moreover, $|\mathcal{N}_{2l}|=\mathcal{O}( l^{s-1}\rho^{l})$ as $l\rightarrow \infty$, where $s$ is the maximum of the multiplicities of roots of the recurrence polynomial.
		\end{lemma}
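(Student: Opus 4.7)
The plan is to analyse the characteristic polynomial $p(x) = x^{r+1} - 2\sum_{j=1}^{r-1} x^{r-j} - 1$ associated with the recurrence in \lemref{count}(3) and then to invoke the standard closed form for linear homogeneous recurrences with constant coefficients. The ``convergent limit'' is then the dominant root $\rho$ of $p$, and the asymptotic bound is exactly the classical polynomial-times-exponential growth estimate for such recurrences.

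For the first claim (that $\rho \in [\sqrt 2, 2]$), I would first note that the coefficient sequence of $p$ reads $1, 0, -2, -2, \ldots, -2, -1$, so it has exactly one sign change; by Descartes' rule, $p$ has a unique positive real root, call it $\rho$. To pin it down, I would evaluate at the two endpoints. Using the geometric sum $\sum_{j=1}^{r-1} 2^{r-j} = 2^r - 2$, one obtains $p(2) = 2^{r+1} - 2(2^r - 2) - 1 = 3 > 0$. At $x = \sqrt 2$, summing the geometric series and rationalizing the denominator $\sqrt 2 - 1$ yields
\[
p(\sqrt 2) = 3 + 2\sqrt 2 - 2^{(r+1)/2} - 2^{(r+2)/2},
\]
which equals $-1$ at $r=2$ and is strictly decreasing in $r$, hence negative for all $r \ge 2$. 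The intermediate value theorem then places $\rho \in (\sqrt 2, 2)$.

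For the second claim I would show that $\rho$ dominates every complex root in modulus. Since every non-leading coefficient of $p$ is non-positive, $p$ agrees with its own Cauchy majorant $z^{r+1} - 2z^{r-1} - \cdots - 2z - 1$, so $\rho$ is literally the Cauchy bound of $p$; \thmref{4.4} then immediately yields $|\lambda| \le \rho$ for every root $\lambda$ of $p$. At that point I would invoke the standard general solution for a linear homogeneous recurrence: writing the distinct roots as $\lambda_1, \ldots, \lambda_k$ with multiplicities $m_1, \ldots, m_k \le s$,
\[
|\mathcal{N}_{2l}| = \sum_{i=1}^{k} P_i(l)\,\lambda_i^{l} \qquad (l \ge r+2),
\]
for polynomials $P_i$ with $\deg P_i \le m_i - 1$ determined by the finitely many initial values coming from parts (1)--(2) of \lemref{count}. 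Bounding $|\lambda_i| \le \rho$ and $\deg P_i \le s - 1$ collapses the sum to $\mathcal{O}(l^{s-1}\rho^l)$.

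The step I anticipate as the main obstacle is the uniform-in-$r$ lower bound $\rho \ge \sqrt 2$, i.e.\ verifying $p(\sqrt 2) \le 0$ for every $r \ge 2$; everything else is essentially bookkeeping once one notices that $p$ coincides with its own Cauchy majorant. I would dispatch it by means of the closed-form expression above, which reduces the inequality to an elementary comparison of powers of $2$.
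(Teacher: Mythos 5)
Your proposal is correct and follows essentially the same route as the paper: evaluate $p$ at $\sqrt{2}$ and $2$, apply the intermediate value theorem, identify the unique positive root $\rho$ as the Cauchy bound (since all non-leading coefficients of $p$ are non-positive) so that every root has modulus at most $\rho$, and then read off the $\mathcal{O}(l^{s-1}\rho^l)$ bound from the standard closed form of the linear recurrence. The only difference is cosmetic: you compute $p(\sqrt{2})$ via the full geometric-series closed form, whereas the paper just notes that the $j=1$ term of the sum cancels the leading term, leaving $p(\sqrt{2})=-2\sum_{j=2}^{r-1}2^{(r-j)/2}-1<0$.
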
	
		
		\begin{proof}
			The recurrence polynomial $p(x)= x^{r+1}-2\sum_{j=1}^{r-1}x^{r-j}-1$ of the relation \ref{2.7}. If we define $p$ as a map from real line to real line, so $p(\sqrt{2})<0$: 
			\[ p(\sqrt{2})=2^{\frac{r+1}{2}}-2\sum_{j=1}^{r-1}2^{\frac{r-j}{2}}-1\]
			\[=2^{\frac{r+1}{2}}-2^{\frac{r+1}{2}}-2\sum_{j=2}^{r-1}2^{\frac{r-j}{2}}-1\]
			\[=-2\sum_{j=2}^{r-1}2^{\frac{r-j}{2}}-1<0.\] 		
            Also, 
			\[p(2)= 2^{r+1}-2\sum_{j=1}^{r-1}2^{r-j}-1\]
			\[=2^{r+1}-2(2^{r-1}+2^{r-2}+\dots +2^{2}+2+1)+1\]
			\[=2^{r+1}-2(2^{r}-1)+1=3.\]
			That means, $p(2)>0.$
			
			Then, by Bolzano's theorem, there is at least one zero of the polynomial in the interval $[\sqrt{2},2]$, denote it as $\rho.$  Since, there is no integer in between $[\sqrt{2},2]$, $\rho$ must be an irrational number by \lemref{4.2}. This is the Cauchy bound for $p(x)$. Let the roots of the equation $p(x)=0$ are $ \rho_{1},\rho_{2}\dots \rho_{m} $ where, $ m\leq r+1 $.
			Then, the closed form to the recurrence relation is,
			\[|\mathcal{N}_{2l}|=c_{1}\rho_{1}^{l}+c_{2}\rho_{2}^{l}+\dots +c_{m}\rho_{m}^{l}. \]
			$\rho$ is the root of $p(x)=0$ with the largest absolute value by \thmref{4.4}. And $ c_{k} $ be the coefficient of the repeated root with the largest multiplicity,  say $s$. It is well known that $c_{k}$ is a polynomial in $l$ of degree $(s-1)$. Then, $\dfrac{|\mathcal{N}_{2l}|}{l^{s-1}\rho^{l}}\rightarrow finite$ as $l\rightarrow \infty.$ Therefore,  ${|\mathcal{N}_{2l}|}=\mathcal{O}({l^{s-1}\rho^{l}})$ as $l\rightarrow\infty$.
		\end{proof}
		\subsection{When $r$ is even }
		Odd word length reciprocal geodesic occurs when $r= 2u$. So, we will consider this condition throughout this subsection.
		
		\begin{lemma}
			Suppose $ \mathcal{N}_{2l-1}$ be the set of conjugacy classes of reciprocal words of word length $2l-1$. Then we have :
			
			\begin{enumerate}
				\item If $l\leq r+u+1$, then, $|\mathcal{N}_{2l-1}|=\frac{1}{6}(2^{l-u-1}+2(-1)^{l-u-1})$.
				\item If $l= r+u+2$, then $|\mathcal{N}_{2l-1}|=\frac{1}{6}(2^{l-u-1}+2(-1)^{l-u-1})+\frac{ 1}{6}(2^{u+1}+2(-1)^{u+1})-1$.
				\item If $l\geq r+u+3$, 
				\[|\mathcal{N}_{2l-1}|=|\mathcal{N}_{2(l-u-1)}|=2\sum_{j=1}^{r-1}|\mathcal{N}_{2(l-u-j-2)}|+|\mathcal{N}_{2(l-u-r-2)}|.\]
			\end{enumerate}
		\end{lemma}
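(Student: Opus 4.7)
The plan is to mirror the proof of Lemma \ref{count}, adapted to the odd-word-length case in the setting $r=2u$. I first observe that every reciprocal class of odd word length must be symmetric $p$-reciprocal: from the normal forms in Lemma \ref{lem}, a symmetric representative has length $2n+2\sum|k_i|$ and a $p$-reciprocal representative has length $2(r+1)+2n+2\sum|k_i|$, both even, while only the symmetric $p$-reciprocal representative $\iota\tilde\gamma\iota\gamma_p^{k_1}\cdots\iota\gamma_p^{-k_1}$ has length $r+1+2n+2\sum|k_i|$, which is odd precisely when $r=2u$ is even. Setting this equal to $2l-1$ gives $n+\sum|k_i|=l-u-1$ (with $|k_i|\leq r-1$ for representatives outside the $(\iota\tilde\gamma)^k$-power class), which has the same shape as the equation governing reciprocal representatives of even word length $2(l-u-1)$ in the analysis of Lemma \ref{count}.

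This parallelism is the key to the identity $|\mathcal{N}_{2l-1}|=|\mathcal{N}_{2(l-u-1)}|$, which I will establish via the prefix-stripping bijection $[R=\iota\tilde\gamma w]\mapsto[w]$. The three reciprocator types of $[w]$ on the even side (symmetric, $p$-reciprocal, symmetric $p$-reciprocal, from Theorem \ref{th}) will be shown to correspond to the three subforms of odd-length symmetric $p$-reciprocal representatives given by Lemma \ref{lem}, and the $(\iota\tilde\gamma)^k$-power classes on both sides will match up separately. Granting this identity, parts (1) and (2) of the lemma follow immediately from Lemma \ref{count}(1) and (2) applied to $|\mathcal{N}_{2(l-u-1)}|$, since $l\leq r+u+1$ is equivalent to $l-u-1\leq r$ and $l=r+u+2$ to $l-u-1=r+1$, allowing the stated formulas to be read off directly.

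For part (3), I will derive the recurrence directly, in parallel with Lemma \ref{count}(3): partitioning the cyclically reduced symmetric $p$-reciprocal representatives by the outermost exponent $j=|k_1|$ and stripping the outer pair $\iota\gamma_p^{k_1},\iota\gamma_p^{-k_1}$ leaves an inner cyclically reduced symmetric $p$-reciprocal of odd length $2(l-j-1)-1$; its count equals $|\mathcal{N}_{2(l-u-j-2)}|$ by re-applying the identity established above. Summing over $j=1,\ldots,r-1$ with a factor $2$ for the sign of $k_1$, and adding the single contribution from $j=r$ (where $k_1=-r$ is excluded from the range of the exponent), yields the claimed recurrence. The main obstacle is verifying the second step: one must check that the prefix-stripping map descends cleanly to conjugacy classes and is surjective onto the full set $\mathcal{N}_{2(l-u-1)}$---including all three reciprocator types on the even side, not merely the symmetric subset---which requires a careful application of Theorem \ref{th} to match reciprocator involution types across the two sides.
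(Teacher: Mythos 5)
Your overall strategy coincides with the paper's: observe that for $r=2u$ the odd-length classes are exactly the symmetric $p$-reciprocal ones, turn the length equation $r+1+2n+2\sum|k_i|=2l-1$ into $n+\sum|k_i|=l-u-1$, deduce the identity $|\mathcal{N}_{2l-1}|=|\mathcal{N}_{2(l-u-1)}|$, and then import \lemref{count}; your outer-pair-stripping derivation of the recurrence in (3) is likewise exactly the argument of \lemref{count}(3) transplanted to odd lengths.

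The step you flag as the main obstacle is, however, a genuine gap, and the repair you sketch cannot work as described. By \lemref{lem}(3), an odd-length class has the single normal form $\io\gt\io\g^{k_1}\cdots\io\g^{k_n}\io\g^{-k_n}\cdots\io\g^{-k_1}$, so the stripped word $w=\io\g^{k_1}\cdots\io\g^{-k_1}$ is always of \emph{symmetric} type; the prefix-stripping map therefore lands only in the set of symmetric classes of length $2(l-u-1)$, and there are no ``three subforms'' on the odd side to pair with the three reciprocator types of Theorem~\ref{th}. Consequently the map cannot be surjective onto all of $\mathcal{N}_{2(l-u-1)}$ once that set contains $p$-reciprocal classes, i.e.\ once $l-u-1\geq r+2$, which is precisely the range of part (3). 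For parts (1) and (2) this does not matter, since by \lemref{count}(1),(2) every class of length $2(l-u-1)\leq 2(r+1)$ is symmetric apart from the single class $[(\io\gt)^2]$, which matches $[(\io\gt)^3]$ on the odd side. For part (3) you should either prove only the identity with the symmetric subfamily $A_{2(l-u-1)}$ and adjust the statement accordingly, or, more cleanly, keep your stripping argument entirely on the odd side, which directly yields $|\mathcal{N}_{2l-1}|=2\sum_{j=1}^{r-1}|\mathcal{N}_{2(l-j-1)-1}|+|\mathcal{N}_{2(l-r-1)-1}|$ and suffices for the asymptotics in \lemref{5}; the paper's own proof of (3) is equally terse on this exact point.
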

		\begin{proof}
			To prove $ (1) $ it suffices to show that $|\mathcal{N}_{2l-1}|=|\mathcal{N}_{2(l-u-1)}|$. Let us consider the reciprocal reduced word with word length $2l-1$,  $$ (\io\gt\io  \g^{k_{1}}\io \g^{k_{2}}\dots \io \g^{k_{n}}\io \g^{-k_{n}}\dots\io \g^{-k_{2}}\io \g^{-k_{1}})^\epsilon $$ where, $ \epsilon $ is always odd natural number. But here, $\epsilon$ must be $ 1 $; otherwise, the word length will exceed $ 3r $, which will be a contradiction. Then, we get this equation for the word length $2l-1$ after rewriting the $k_{i}$'s,
			\[ 2\sum_{i=1}^{n}|k_{i}| +2n+2qr+2u+1= 2l-1\]
			\[\imp\sum_{i=1}^{n}|k_{i}| +n+qr= l-1-u. \]
			$ (1)$ follows from the above equality.

			$ (2) $ follows from the previous case and from the arguments in the proof of \lemref{count} and the fact that $ [(\io\gt)^{3}] $ also belongs to $ \mathcal{N}_{3(r+1)}. $
			
			For $ (3) $, consider the primitive case; then, this will follow from the proof of \lemref{count}. \end{proof}
		
		\medskip 
		\begin{pro}
			A non-constant polynomial $p(x)$ in $\mathbb{Z}[X]$ is irreducible if and only if $p(x+a)$ is irreducible in $\mathbb{Z}[X]$ where $a$ is an integer. 
		\end{pro}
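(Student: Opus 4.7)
The plan is to prove this via the substitution automorphism $\phi_a \colon \mathbb{Z}[X] \to \mathbb{Z}[X]$ defined by $\phi_a(f(x)) = f(x+a)$. First I would check that $\phi_a$ is a ring homomorphism: it clearly fixes the constants, respects addition, and respects multiplication since substituting $x+a$ for $x$ commutes with polynomial operations. Next I would exhibit its inverse: $\phi_{-a}(f(x)) = f(x-a)$ satisfies $\phi_a \circ \phi_{-a} = \phi_{-a} \circ \phi_a = \mathrm{id}$ by the identity $f((x+a)-a) = f(x)$. Hence $\phi_a$ is a ring automorphism of $\mathbb{Z}[X]$.

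The heart of the argument is that any ring automorphism of $\mathbb{Z}[X]$ preserves irreducibility. Concretely, suppose $p(x+a) = F(x) G(x)$ is a factorization in $\mathbb{Z}[X]$ with $F, G$ non-units. Applying $\phi_{-a}$ gives $p(x) = F(x-a)\, G(x-a)$. To conclude that this factorization of $p(x)$ is also non-trivial, I would verify that $\phi_a$ preserves the set of units. Since the units of $\mathbb{Z}[X]$ are precisely $\pm 1$ (because $\mathbb{Z}[X]$ is an integral domain where a product of polynomials has degree equal to the sum of the degrees, and the units of $\mathbb{Z}$ are $\pm 1$), and $\phi_a(\pm 1) = \pm 1$, a polynomial $h(x)$ is a unit if and only if $\phi_a(h(x)) = h(x+a)$ is a unit. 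Therefore $F(x-a)$ and $G(x-a)$ are non-units in $\mathbb{Z}[X]$, giving a non-trivial factorization of $p(x)$. The reverse implication is symmetric: any non-trivial factorization $p(x) = f(x) g(x)$ yields $p(x+a) = f(x+a)\, g(x+a)$ with both factors non-units.

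Combining the two directions, $p(x)$ is reducible in $\mathbb{Z}[X]$ if and only if $p(x+a)$ is reducible, which is equivalent to the stated biconditional on irreducibility (noting that $\phi_a$ preserves the degree, so non-constant polynomials map to non-constant polynomials). Since this argument is essentially a formal consequence of the existence of the automorphism $\phi_a$, there is no real obstacle; the only point deserving explicit attention is the unit-preservation step, which is what ensures that ``non-trivial'' factorizations correspond on both sides.
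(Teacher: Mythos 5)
Your proof is correct: the substitution map $\phi_a(f(x))=f(x+a)$ is a ring automorphism of $\mathbb{Z}[X]$ with inverse $\phi_{-a}$, it preserves units (which are exactly $\pm 1$) and degrees, so it carries non-trivial factorizations to non-trivial factorizations in both directions. The paper states this proposition without proof as a standard fact, and your argument is precisely the standard justification, so there is nothing to compare against and no gap to report.
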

		So, $$p(x+1)=(x+1)^{r+1}-2\sum_{j=1}^{r-1}(x+1)^{r-j}-1$$
		The constant term of the polynomial is, $1-2\underbrace{(1+\dots+1)}_{\text{odd times}}-1$, say $2c$ where, $c$ is odd.
		Then, by the above lemma and applying Eisenstein Criterion with the prime $2$, we obtain that $p(z)$ is irreducible in $\mathbb{Z}[X]$. Now note the following well-known theorem: 
		
		\begin{thm}
			A non-constant polynomial in $ \mathbb{Z}[\textit{X}]$ is irreducible in $\mathbb{Z}[\textit{X}]$ if and only if it is both irreducible in $\mathbb{Q}[\textit{X}]$ and primitive in $\mathbb{Z}[\textit{X}]$ .
		\end{thm}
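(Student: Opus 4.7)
The plan is to prove this classical consequence of Gauss's lemma by establishing both directions of the biconditional, with the multiplicativity of the content of polynomials being the only nontrivial ingredient.

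For the reverse direction, I would assume $f\in\mathbb{Z}[X]$ is non-constant, primitive, and irreducible in $\mathbb{Q}[X]$, and suppose for contradiction that $f=gh$ in $\mathbb{Z}[X]$ with neither factor a unit. Viewing this as a factorization in $\mathbb{Q}[X]$ and invoking irreducibility over $\mathbb{Q}$, one of $g,h$ must be a unit in $\mathbb{Q}[X]$, hence a nonzero rational constant, and in particular a nonzero integer $c$ (since it lies in $\mathbb{Z}[X]$). Then $c$ divides every coefficient of $f$, so $c$ divides the content of $f$, which equals $1$ by primitivity. Hence $c=\pm 1$, a unit in $\mathbb{Z}[X]$, contradicting the assumption.

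For the forward direction, assume $f\in\mathbb{Z}[X]$ is irreducible in $\mathbb{Z}[X]$. I would first check primitivity: if the content $d$ of $f$ were greater than $1$, then $f=d\cdot(f/d)$ with $f/d\in\mathbb{Z}[X]$ non-constant would be a nontrivial factorization, since the units of $\mathbb{Z}[X]$ are only $\pm 1$. Next I would verify irreducibility in $\mathbb{Q}[X]$. If $f=gh$ in $\mathbb{Q}[X]$ with $\deg g,\deg h\geq 1$, clearing a common denominator $N\in\mathbb{Z}_{>0}$ produces $Nf=GH$ in $\mathbb{Z}[X]$ with $G,H$ of positive degree. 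Taking contents and applying Gauss's lemma on multiplicativity of content yields $N=c(G)c(H)$ since $c(f)=1$. Then $G'=G/c(G)$ and $H'=H/c(H)$ are primitive polynomials in $\mathbb{Z}[X]$ of positive degree with $f=G'H'$, contradicting irreducibility in $\mathbb{Z}[X]$.

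The main technical obstacle is the multiplicativity of content itself, which I would justify by the standard reduction modulo a prime: if $p$ divides $c(GH)$, then the reduction of $GH$ in $\mathbb{F}_p[X]$ vanishes, and since $\mathbb{F}_p[X]$ is an integral domain one of the reductions of $G$ or $H$ must vanish, so $p$ already divides the content of one factor. Beyond this point, the argument is straightforward manipulation of contents and the unit structure of $\mathbb{Z}[X]$ versus $\mathbb{Q}[X]$.
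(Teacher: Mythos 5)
Your proof is correct and complete: both directions are handled properly, the use of primitivity established in the first half of the forward direction to get $c(Nf)=N$ is legitimate, and the reduction-mod-$p$ argument for multiplicativity of content is the standard and correct justification. Note that the paper itself offers no proof of this statement --- it is quoted as a well-known theorem and used as a black box to pass from irreducibility of $p(x)$ in $\mathbb{Z}[X]$ (obtained via Eisenstein) to irreducibility in $\mathbb{Q}[X]$ --- so there is no argument in the paper to compare against; your write-up supplies exactly the standard Gauss's-lemma proof that the authors implicitly invoke.
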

		It follows that $p(z)$ is irreducible in $\mathbb{Q}[X].$ It is also well-known that an irreducible polynomial over $\mathbb{Q}$ cannot have repeated roots in $\mathbb{C}$.  
		Therefore, $p(z)$ has no repeated roots in $\mathbb{C}$. This implies all $c_k$'s are constant, i.e. $s$ must be $1$. 
		
		\begin{lemma}\label{5} $|\mathcal{N}_{2l-1}|=\mathcal{O} (\rho^{l})$ as $l\rightarrow \infty$ where, $ \rho $ is the real positive root of the equation $p(z)=0$. 
		\end{lemma}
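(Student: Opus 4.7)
The plan is to reduce the odd-length case to the even-length asymptotic already established in \lemref{2.7}, then invoke the irreducibility argument in the proposition just above the statement to conclude that the multiplicity parameter $s$ equals $1$ in this case.

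First, I would use the identity
\[|\mathcal{N}_{2l-1}|=|\mathcal{N}_{2(l-u-1)}|\]
furnished by the preceding lemma for $l\geq r+u+3$. This tells us that $|\mathcal{N}_{2l-1}|$ inherits its asymptotic behavior from $|\mathcal{N}_{2m}|$ with $m=l-u-1$. Since $u$ (determined by $r=2u$) is a fixed constant, any estimate of the form $\mathcal{O}(f(m))$ for $|\mathcal{N}_{2m}|$ translates into an estimate of the same shape for $|\mathcal{N}_{2l-1}|$, up to an absolute multiplicative constant.

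Next, I would apply \lemref{2.7}, which yields
\[|\mathcal{N}_{2m}|=\mathcal{O}(m^{s-1}\rho^{m})\quad\text{as }m\to\infty,\]
where $s$ is the maximum multiplicity among the roots of $p(z)$. The crucial point is to pin down $s=1$ in the case $r=2u$. This is precisely what the proposition and theorem above the statement accomplish: by Eisenstein's criterion at the prime $2$ applied to $p(x+1)$—whose leading coefficient is $1$ and whose constant term is $2c$ with $c$ odd, as computed in the excerpt—the polynomial $p(z)$ is irreducible in $\mathbb{Z}[X]$, hence in $\mathbb{Q}[X]$. Since every irreducible polynomial over $\mathbb{Q}$ is separable in characteristic zero, $p(z)$ has only simple roots in $\mathbb{C}$, giving $s=1$.

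Combining these ingredients, the coefficients $c_k$ in the closed-form solution of the linear recurrence are all constants, so $|\mathcal{N}_{2m}|=\mathcal{O}(\rho^{m})$. Substituting $m=l-u-1$ then gives
\[|\mathcal{N}_{2l-1}|=\mathcal{O}(\rho^{l-u-1})=\mathcal{O}(\rho^{l}),\]
since $\rho^{-u-1}$ is a positive constant independent of $l$ (the positivity of $\rho$, with $\rho\in[\sqrt{2},2]$, is guaranteed by \lemref{2.7}). I do not anticipate any significant obstacle: all the substantive work is already contained in \lemref{2.7} and in the Eisenstein irreducibility argument. The only thing to keep track of is that the index shift by $u+1$ is harmless, being absorbed into the implied constant.
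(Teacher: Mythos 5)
Your proposal is correct and follows essentially the same route as the paper: reduce to the even-length case via the identity $|\mathcal{N}_{2l-1}|=|\mathcal{N}_{2(l-u-1)}|$, apply \lemref{2.7}, and use the Eisenstein-criterion irreducibility argument to conclude $s=1$ so that the polynomial factor disappears and the index shift by $u+1$ is absorbed into the constant. The paper's own proof is just a terser version of exactly this argument.
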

		\begin{proof}
			By the above arguments and  \lemref{2.7}, as $l\rightarrow \infty$ we see that $$|\mathcal{N}_{2l-1}|=\mathcal{O} (l^{s-1}\rho^{l-u-1})=\mathcal{O}(\rho^{l}),$$ since, $|\rho^{u}|$ is constant and $s$ is $1$.
		\end{proof}
		
		\subsection*{Proof of \thmref{mthm}} 
		The theorem follows from \lemref{2.7} and \lemref{5}. \qed
		\medskip

To establish \corref{cor1}, we observe the following lemmas. 
	\begin{lemma}
		Let $j = \lfloor \tfrac{l}{2} \rfloor$. Then
		\[
		|\mathcal{N}^{np}_{l}| = \mathcal{O}\left(\left\lfloor \tfrac{j+1}{2} \right\rfloor^{s-1} \rho^{\left\lfloor \tfrac{j+1}{2} \right\rfloor} \right),
		\]
		where $\mathcal{N}^{np}_{l}$ denotes the set of all non-primitive reciprocal classes in $\Gamma_p$ with word length $l$.
	\end{lemma}
	
	\begin{proof}
		Let $\mathcal{N}^{p}_{l}$ denote the set of all primitive reciprocal classes in $\Gamma_p$ with word length $l$. Clearly, both $\mathcal{N}^{np}_{l}$ and $\mathcal{N}^{p}_{l}$ are subcollections of $\mathcal{N}_{l}$. Note that a non-primitive reciprocal class of word length $l$ arises from a primitive class of word length $q$ dividing $l$, where $q < l$. Therefore,
		\[
		|\mathcal{N}^{np}_{l}| \leq \sum_{q \mid l,~q < l} |\mathcal{N}^{p}_{q}| \leq \sum_{q = 1}^{\lfloor \frac{l}{2} \rfloor} |\mathcal{N}_{q}|.
		\]
		Using the known upper bound on $|\mathcal{N}_q|$, we obtain
		\[
		\sum_{q=1}^{\lfloor \frac{l}{2} \rfloor} |\mathcal{N}_{q}| = \sum_{q=1}^{\lfloor \frac{l}{2} \rfloor} \mathcal{O}\left(\left\lfloor \tfrac{q+1}{2} \right\rfloor^{s-1} \rho^{\left\lfloor \tfrac{q+1}{2} \right\rfloor} \right).
		\]
		Since the dominant term in this sum corresponds to the largest value of $q$, namely $q = \lfloor \tfrac{l}{2} \rfloor = j$, we conclude that
		\[
		|\mathcal{N}^{np}_{l}| = \mathcal{O}\left(\left\lfloor \tfrac{j+1}{2} \right\rfloor^{s-1} \rho^{\left\lfloor \tfrac{j+1}{2} \right\rfloor} \right).
		\]
		This proves the lemma. 
	\end{proof}

		
	\begin{lemma}\label{prim}
			Let $\mathcal{N}_{l}$ be the set of all reciprocal classes in $\Gamma_p$ with word length $l$, and $\mathcal{N}^{p}_{l}$ be the subset of primitive reciprocal classes. Then \[
		|\mathcal{N}^{p}_{l}| \sim |\mathcal{N}_{l}|.
		\]
		
	\end{lemma}
	
	\begin{proof}
		Let $m$ be the multiplicity of the dominant root $\rho$ of the recurrence relation defined by the characteristic equation \[x^{r+1} - 2\sum_{j=1}^{r-1} x^{r-j} - 1 = 0.\]Then the total number of reciprocal classes satisfies
		\[
		|\mathcal{N}_{l}| \sim \left(\left\lfloor \tfrac{l+1}{2} \right\rfloor^{m} \rho^{\left\lfloor \tfrac{l+1}{2} \right\rfloor} \right),
		\]
		and the number of non-primitive classes satisfies the upper bound
		\[
		|\mathcal{N}^{np}_{l}| \leq \left(\left\lfloor \tfrac{j+1}{2} \right\rfloor^{m} \rho^{\left\lfloor \tfrac{j+1}{2} \right\rfloor} \right)
		\quad \text{for } j = \left\lfloor \tfrac{l}{2} \right\rfloor.
		\]
	
		Also, by definition,
		\[
		|\mathcal{N}_{l}| = |\mathcal{N}^{np}_{l}| + |\mathcal{N}^{p}_{l}| \leq \left(\left\lfloor \tfrac{j+1}{2} \right\rfloor^{m} \rho^{\left\lfloor \tfrac{j+1}{2} \right\rfloor} \right) + |\mathcal{N}^{p}_{l}|.
		\]
		We obtain the inequality
		\[
		|\mathcal{N}_{l}| - \left(\left\lfloor \tfrac{j+1}{2} \right\rfloor^{m} \rho^{\left\lfloor \tfrac{j+1}{2} \right\rfloor} \right) \leq |\mathcal{N}^{p}_{l}| \leq |\mathcal{N}_{l}|.
		\]
		Dividing by $|\mathcal{N}_{l}|$, we get
		\[
		1 - \frac{\left(\left\lfloor \tfrac{j+1}{2} \right\rfloor^{m} \rho^{\left\lfloor \tfrac{j+1}{2} \right\rfloor} \right)}{|\mathcal{N}_{l}|}
		\leq \frac{|\mathcal{N}^{p}_{l}|}{|\mathcal{N}_{l}|} \leq 1.
		\]
		
		Since \( j = \lfloor \tfrac{l}{2} \rfloor \) and both the numerator and denominator grow like a power of $\rho$ with comparable exponents as \( l \to \infty \), the ratio tends to zero. Thus,
		\[
		\frac{|\mathcal{N}^{p}_{l}|}{|\mathcal{N}_{l}|} \to 1 \quad \text{as } l \to \infty,
		\]
		which implies
		\[
		|\mathcal{N}^{p}_{l}| \sim |\mathcal{N}_{l}|.
		\]
    This proves the lemma. 
	\end{proof}

\subsection{Proof of \corref{cor1}}
	The corollary follows from \lemref{prim} and \thmref{mthm}. \qed

		\subsection*{Acknowledgment} 
		Das acknowledges support from UGC-NFSC senior research fellowship. Gongopadhyay is partially supported by the SERB core research grant CRG/2022/003680.
		
		The authors thank Anuj Jakhar for useful discussions on the roots of polynomials over the integers.

	\end{document}